\theoremstyle{plain}
\theoremstyle{definition}
\newtheorem{theorem}{Theorem}[section]
\newtheorem{lemma}[theorem]{Lemma}
\newtheorem{corollary}[theorem]{Corollary}
\newtheorem{note}[theorem]{Note}
\newtheorem{jadval}[theorem]{Table}
\newtheorem{nemoodar}[theorem]{Diagram}
\newtheorem{remark}[theorem]{Remark}
\theoremstyle{remark}
\numberwithin{equation}{section}
\newcommand{\SP}{\: \: \: \: \:}
\title{Set--theoretical entropies of generalized shifts}
\author[Z. Nili Ahmadabadi, F. Ayatollah Zadeh Shirazi]{Zahra Nili Ahmadabadi, Fatemah Ayatollah Zadeh Shirazi}
\begin{document}
\begin{abstract}
In the following text for arbitrary $X$ with at least two elements, nonempty set $\Gamma$
and self-map $\varphi:\Gamma\to\Gamma$ we prove the set-theoretical entropy of 
generalized shift $\sigma_\varphi:X^\Gamma\to X^\Gamma$ ($\sigma_\varphi((x_\alpha)_{\alpha\in\Gamma})=(x_{\varphi(\alpha)})_{\alpha\in\Gamma}$
(for $(x_\alpha)_{\alpha\in\Gamma}\in X^\Gamma$))
is either zero or infinity, moreover it is zero if and only if $\varphi$ is quasi-periodic.
\\
We continue our study on contravariant set-theoretical entropy of generalized shift
and motivate the text using counterexamples dealing with algebraic, topological, set-theoretical
and contravariant  set-theoretical positive entropies of generalized shifts.

\end{abstract}
\maketitle
\noindent {\small {\bf 2010 Mathematics Subject Classification:}  54C70 \\
{\bf Keywords:}} Bounded map, Contravariant set-theoretical entropy, Quasi-periodic, Set-theoretical entropy.
\section{Introduction}
\noindent Amongst the most powerful tools in ergodic theory and dynamical
systems we may mention one-sided shift 
$\mathop{\{1,\ldots,k\}^{\mathbb N}\to\{1,\ldots,k\}^{\mathbb N}}\limits_{\:\:\:\:
(x_n)_{n\geq1}\mapsto(x_{n+1})_{n\geq1}}$ and two-sided shift 
\linebreak
$\mathop{\{1,\ldots,k\}^{\mathbb Z}\to\{1,\ldots,k\}^{\mathbb Z}}\limits_{\:\:\:\:
(x_n)_{n\in{\mathbb Z}}\mapsto(x_{n+1})_{n\in{\mathbb Z}}}$ \cite{walters}.
Now suppose $X$ is an arbitrary set with at least two elements,
$\Gamma$ is a nonempty set, and $\varphi:\Gamma\to\Gamma$ is arbitrary,
then $\sigma_\varphi:X^\Gamma\to X^\Gamma$ with $\sigma_\varphi((x_\alpha)_{\alpha\in 
\Gamma})=(x_{\varphi(\alpha)})_{\alpha\in\Gamma}$ (for $(x_\alpha)_{\alpha\in\Gamma}\in 
X^\Gamma$) is a {\it generalized shift}. Generalized shifts have been introduced for the
first time in \cite{AHK}. It's evident that for self-map $\varphi:\Gamma\to\Gamma$ 
and generalized shift $\sigma_\varphi:X^\Gamma\to X^\Gamma$ if $X$ has a group
(resp. vector space, topological) structure, then $\sigma_\varphi:X^\Gamma\to X^\Gamma$ is a group
homomorphism (resp. linear map, continuous (in which $X^\Gamma$ considered under product
topology)), so many dynamical \cite{dev} and non-dynamical \cite{anna} properties of 
generalized shifts have been studied in several texts. In this text our main aim is to
study set-theoretical and contravariant set-theoretical entropy of generalized shifts.
We complete our investigations with a comparative study regarding 
 set-theoretical, contravariant set-theoretical, topological and algebraic entropies of generalized shifts.
\\
\noindent For self-map $g:A\to A$ and $x,y\in A$ let $x\leq_gy$ if and only if there 
exists $n\geq0$ with $g^n(x)=y$, then $(A,\leq_g)$ is a preordered 
(reflexive and transitive) set. Note that for set $A$ by $|A|$ we mean the cardinality of $A$
if it is finite and $\infty$ otherwise.
\\
Although one may obtain the following lemma using \cite{anna}, we establish it
here directly.
\begin{note}[Bounded self-map, Quasi-periodic self-map]
For self-map $g:A\to A$ the following statements are equivalent
(consider preordered set $(A,\leq_g)$):
\begin{itemize}
\item[1.] there exists $N\geq1$ such that for all totally preordered subset $I$ of $A$
	(reflexive, transitive and for all $x,y\in A$ we have $x\leq_gy$ or $y\leq_g x$) we
	have $|I|\leq N$ (i.e., $g:A\to A$ is {\it bounded} \cite{anna}),
\item[2.] $\sup\left\{|\{g^n(x):n\geq0\}|:x\in A\right\}<\infty$,
\item[3.] there exists $n>m\geq1$ with $g^n=g^m$ ($g$ is {\it quasi-periodic}).
\end{itemize}
\end{note}
\begin{proof}
``(1) $\Rightarrow$ (2)''
Suppose there exists $N\geq1$ such that for all totally 
preordered subset $I$ of $A$ we have $|I|\leq N$.
Choose $x\in A$, then $\{g^n(x):n\geq0\}$ is a totally preordered subset of $A$,
thus $|\{g^n(x):n\geq0\}|\leq N$, hence
$\sup\left\{|\{g^n(y):n\geq0\}|:y\in A\right\}$
$\leq N<\infty$.
\\
``(2) $\Rightarrow$ (3)''
Suppose $\sup\left\{|\{g^n(x):n\geq0\}|:x\in A\right\}=N<\infty$, then for all $x\in A$
we have $\{g^n(x):n\geq0\}=\{x,g(x),\ldots,g^{N-1}(x)\}$ and there exists
$n_x\in\{0,\ldots,N-1\}$ with $g^N(x)=g^{n_x}(x)$, thus $g^{N-n_x}(g^N(x))=g^N(x)$
and
\[\forall z\in g^N(A)\:\:\exists i\in\{1,\ldots,N\}\:(g^i(z)=z)\]
so for all $z\in g^N(A)$ we have $g^{N!}(z)=z$, thus for all $x\in A$ we have
$g^{N!+N}(x)=g^N(x)$.
\\
``(3) $\Rightarrow$ (1)''
Suppose there exist $n>m\geq1$ with $g^n=g^m$ and $I$ is a totally preordered subset of $A$,
choose distinct $x_1,\ldots,x_k\in I$ and suppose $x_1\leq_gx_2\leq_g\cdots\leq_gx_k$.
For all $i\in\{1,\ldots,k\}$ there exists $p_i\geq0$ with $x_i=g^{p_i}(x_1)$,
so $\{x_1,\ldots,x_k\}\subseteq\{g^i(x_1):i\geq0\}=\{g^i(x_1):i\in\{0,\ldots,n\}\}$ and $k\leq n+1$.
Hence $|I|\leq n+1$ which completes the proof.
\end{proof}
\noindent {\bf Convention.} In the following text suppose $X$ is an arbitrary set with at least two elements,
$\Gamma$ is a nonempty set, and $\varphi:\Gamma\to\Gamma$ is arbitrary.
\subsection{Background on set-theoretical entropy}
For self-map $g:A\to A$ and $a\in A$, the set $\{g^n(a):n\geq0\}$ is
the {\it orbit} of $a$, we say $a\in A$ is a {\it wandering point} (or {\it non-quasi periodic point})
of $g$, if $\{g^n(a):n\geq0\}$ is infinite, or equivalently $\{g^n(a)\}_{n\geq1}$ is a 
one-to-one sequence. We denote the collection of all wandering points of $g:A\to A$ with
$W(g)$.
\\
For $g:A\to A$ denote the {\it infinite orbit number of} $g$ by  $\mathfrak{o}(g)$
and define it with $\sup(\{0\}\cup\{k\geq1:\exists a_1,\ldots,a_k\in W(g)\:
(\{g^n(a_1)\}_{n\geq1},\ldots,\{g^k(a)\}_{n\geq1}$ are pairwise disjoint sequences$)\})$,
i.e.  $\mathfrak{o}(g)=\sup(\{0\}\cup\{k\geq1:$ there exists $k$ pairwise disjoint infinite orbits$\})$.
So $W(g)\neq\varnothing$ if and only if $\mathfrak{o}(g)\geq1$.
On the other hand for finite subset $D$ of $A$ the following limit exists \cite{set}:
\[{\rm ent}_{\rm set}(g,D)={\displaystyle\lim_{n\to\infty}\dfrac{|D\cup g(D)\cup\cdots\cup
g^{n-1}(D)|}{n}}\:.\]
Now we call
$\sup\{{\rm ent}_{\rm set}(g,D):D$ is a finite subset of $A\}$ the {\it set-theoretical entropy
of} $g$ and denote it with ${\rm ent}_{\rm set}(g)$. Moreover 
${\rm ent}_{\rm set}(g)=\mathfrak{o}(g)$ \cite{set}.
\subsection{Background on contravariant set-theoretical entropy}
Suppose self-map $g:A\to A$ is onto and finite fibre (i.e., for all $a\in A$, $g^{-1}(a)$ is finite),
then for finite subset $D$ of $A$ the following limit exists \cite{anna2}:
\[{\rm ent}_{\rm cset}(g,D)={\displaystyle\lim_{n\to\infty}\dfrac{|D\cup g^{-1}(D)\cup\cdots\cup
g^{-(n-1)}(D)|}{n}}\]
Now let ${\rm ent}_{\rm cset}(g):=\sup\{{\rm ent}_{\rm cset}(g,D):D$ is a finite subset of $A\}$. If
$k:A\to A$ is an arbitrary finite fibre map, then for {\it surjective cover} of $k$, i.e.
 ${\rm sc}(k):=\bigcap\{k^n(A):n\geq1\}$, the map 
$k\restriction_{{\rm sc}(k)}:{\rm sc}(k)\to{\rm sc}(k)$ is an onto finite fibre map and
${\rm ent}_{\rm cset}(k):={\rm ent}_{\rm cset}(k\restriction_{{\rm sc}(k)})$ is the
{\it contravariant set-theoretical entropy} of $k$. Moreover we say $\{x_n\}_{n\geq1}$
is a $k-${\it anti-orbit sequence} (or simply anti-orbit sequence)
if for all $n\geq1$ we have $k(x_{n+1})=x_n$ and define
{\it infinite anti-orbit number of} $k$ as  
$\mathfrak{a}(k)=\sup(\{0\}\cup\{j\geq1:$ there exists $j$ pairwise disjoint infinite anti-orbits$\})$.
Moreover 
${\rm ent}_{\rm cset}(k)=\mathfrak{a}(k)$ \cite{anna2}.
\section{Set-theoretical entropy of $\sigma_\varphi:X^\Gamma\to X^\Gamma$}
\noindent In this section we prove that 
for generalized shift $\sigma_\varphi:X^\Gamma\to X^\Gamma$, 
${\rm ent}_{\rm set}( \sigma_\varphi)\in\{0,\infty\}$ and ${\rm ent}_{\rm set}( \sigma_\varphi)=0$
if and only if $\varphi$ is quasi-periodic.
\begin{lemma}\label{salam10}
If $W(\varphi)\neq\varnothing$, then $W(\sigma_\varphi)\neq\varnothing$.
\end{lemma}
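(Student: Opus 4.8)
The plan is to exhibit a single point of $X^\Gamma$ whose $\sigma_\varphi$-orbit is infinite. Since $W(\varphi)\neq\varnothing$, fix $\alpha_0\in W(\varphi)$ and set $\beta_n:=\varphi^n(\alpha_0)$ for $n\geq0$; by the definition of a wandering point the $\beta_n$ are pairwise distinct. Fix two distinct elements $a,b\in X$ (possible as $|X|\geq2$). I would then encode a one-sided symbolic sequence along the orbit $\{\beta_n:n\geq0\}$ and read off the iterates of $\sigma_\varphi$ as its shifts.

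First I would record the elementary identity $\sigma_\varphi^n\big((x_\alpha)_{\alpha}\big)=(x_{\varphi^n(\alpha)})_{\alpha}$, which follows by an immediate induction from the definition of $\sigma_\varphi$. Evaluating it at the coordinate $\beta_j$ shows that the $\beta_j$-entry of $\sigma_\varphi^n\big((x_\alpha)_{\alpha}\big)$ equals $x_{\varphi^n(\beta_j)}=x_{\beta_{n+j}}$. Thus, once I define a point $(x_\alpha)_\alpha$ by choosing a sequence $(c_k)_{k\geq0}\in\{a,b\}^{\mathbb N}$, putting $x_{\beta_k}:=c_k$ and $x_\alpha:=a$ for every $\alpha\notin\{\beta_k:k\geq0\}$, the restriction of $\sigma_\varphi^n\big((x_\alpha)_\alpha\big)$ to the coordinates $(\beta_j)_{j\geq0}$ is exactly the shifted sequence $(c_{n+j})_{j\geq0}$.

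With this in hand the conclusion is forced. If $\sigma_\varphi^n\big((x_\alpha)_\alpha\big)=\sigma_\varphi^m\big((x_\alpha)_\alpha\big)$ for some $n>m\geq0$, then comparing $\beta_j$-coordinates yields $c_{n+j}=c_{m+j}$ for all $j\geq0$; writing $d=n-m$ this says $c_{k+d}=c_k$ for all $k\geq m$, i.e.\ $(c_k)_k$ is eventually periodic. Hence it suffices to choose $(c_k)_k$ that is \emph{not} eventually periodic, for then $n\mapsto\sigma_\varphi^n\big((x_\alpha)_\alpha\big)$ is injective, the orbit of $(x_\alpha)_\alpha$ is infinite, and $(x_\alpha)_\alpha\in W(\sigma_\varphi)$. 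Such a sequence exists: one may take $c_k=b$ when $k$ is a perfect square and $c_k=a$ otherwise (the gaps between successive $b$'s tend to infinity, ruling out eventual periodicity), or simply invoke that the eventually periodic sequences in $\{a,b\}^{\mathbb N}$ form a countable set while $\{a,b\}^{\mathbb N}$ is uncountable.

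The mechanism above is essentially the whole proof; the one idea that has to be spotted is that reading the iterates of $\sigma_\varphi$ along an infinite $\varphi$-orbit turns $\sigma_\varphi$ into the ordinary one-sided shift on symbol sequences, after which any aperiodic word furnishes the required wandering point. The only point needing a little care is the bookkeeping that equality of two full iterates forces the eventual-periodicity relation on $(c_k)_k$; everything else is routine, and the values of $(x_\alpha)_\alpha$ off the orbit of $\alpha_0$ play no role.
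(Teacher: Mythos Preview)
Your proof is correct and follows essentially the same strategy as the paper's: encode a non-eventually-periodic binary word along the infinite $\varphi$-orbit of a wandering point and observe that iterating $\sigma_\varphi$ acts as the one-sided shift on that word. The only cosmetic difference is that the paper marks the positions $2^n$ and verifies aperiodicity by an explicit case analysis, whereas you mark perfect squares (or invoke cardinality) and argue via growing gaps, which is arguably cleaner.
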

\begin{proof}
Consider distinct points $p,q\in X$ and $\theta\in W(\varphi)$, thus $(\varphi^n(\theta))_{n\geq0}$
is a one-to-one sequence. Let:
\[x_\alpha:=\left\{\begin{array}{lc} p & \alpha\in\{\varphi^{2^n}(\theta):n\geq1\}\:, \\
q & {\rm otherwise\:,}\end{array}\right.\]
then $(x_\alpha)_{\alpha\in\Gamma}\in W(\sigma_\varphi)$, otherwise
there exists $s>t\geq1$ such that $\sigma_{\varphi}^s((x_\alpha)_{\alpha\in\Gamma})=
\sigma_{\varphi}^t((x_\alpha)_{\alpha\in\Gamma})$, thus $x_{\varphi^s(\alpha)}=x_{\varphi^t(\alpha)}$
for all $\alpha\in\Gamma$.  In particular, $x_{\varphi^{s+i}(\theta)}=x_{\varphi^{t+i}(\theta)}$ for all
$i\geq0$. Choose $j\geq1$ with $j+s\in\{2^n:n\geq1\}$.  We have the following cases:
\\
{\bf Case 1:} $j+t\notin\{2^n:n\geq1\}$.  In this case we have $p=x_{\varphi^{s+j}(\theta)}=
x_{\varphi^{t+j}(\theta)}=q$, which is a contradiction.
\\
{\bf Case 2:} $j+t\in\{2^n:n\geq1\}$.  In this case using $j+t>j+s\in\{2^n:n\geq1\}$ we have
$j+t\geq3$. There exist $k\geq1$ and $l\geq2$ with $j+s=2^k$ and $j+t=2^l$.
Let $i=2j+s$, then $i+s=2(j+s)=2^{k+1}\in\{2^n:n\geq1\}$ and $i+t=2^l+2^k=2^l(1+2^{k-l})\notin 
\{2^n:n\geq1\}$ (note that $k>l$ and $1+2^{k-l}$ is odd). So $p=x_{\varphi^{s+i}(\theta)}=
x_{\varphi^{t+i}(\theta)}=q$, which is a contradiction.
\\
Using the above two cases, we have $(x_\alpha)_{\alpha\in\Gamma}\in W(\sigma_\varphi)$.
\end{proof}
\begin{lemma}\label{salam20}
If $W(\varphi)\neq\varnothing$, then $\mathfrak{o}(\sigma_\varphi)=\infty$.
\end{lemma}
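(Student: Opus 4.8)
The plan is to strengthen Lemma \ref{salam10}: rather than producing a single wandering point, I would exhibit, for every $k\geq1$, a family of $k$ points of $X^\Gamma$ whose forward $\sigma_\varphi$-orbits are infinite and pairwise disjoint (indeed I would produce countably many at once), so that $\mathfrak{o}(\sigma_\varphi)=\infty$ straight from the definition. The key observation is that it suffices to control the coordinates lying on a single infinite $\varphi$-orbit. Fix $\theta\in W(\varphi)$ and write $\beta_n:=\varphi^n(\theta)$; since $\theta$ is wandering the $\beta_n$ are pairwise distinct and $\varphi(\beta_n)=\beta_{n+1}$, so for any $x=(x_\alpha)_{\alpha\in\Gamma}\in X^\Gamma$ the $\beta_n$-coordinate of $\sigma_\varphi^m(x)$ is exactly $x_{\beta_{n+m}}$. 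Thus on the ``orbit coordinates'' $\{\beta_n:n\geq0\}$ the map $\sigma_\varphi$ acts as the one-sided shift, and to separate two orbit points it is enough to find one index $\beta_n$ at which they differ.

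Next I would encode countably many distinct ``patterns'' along this orbit. Fix distinct $p,q\in X$. Partition the integers $\geq2$ into infinitely many infinite sets $V_1,V_2,\dots$, and for each $r\geq1$ let $g^{(r)}_0<g^{(r)}_1<\cdots$ enumerate $V_r$ increasingly. Define positions $a^{(r)}_0=0$ and $a^{(r)}_{i+1}=a^{(r)}_i+g^{(r)}_i$, put $A_r:=\{a^{(r)}_i:i\geq0\}$, and let $x^{(r)}\in X^\Gamma$ be $p$ at every coordinate $\beta_n$ with $n\in A_r$ and $q$ at every other coordinate of $\Gamma$. The two structural features built in are that the gaps of each $A_r$ are strictly increasing and that the gap-value sets $V_r$ are pairwise disjoint.

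Finally I would verify the two required properties by reading off the $\beta_n$-coordinates only. If $\sigma_\varphi^m(x^{(r)})=\sigma_\varphi^{m'}(x^{(r')})$, then comparing $\beta_n$-coordinates for all $n\geq0$ forces the shifted position sets $(A_r-m)\cap\mathbb N$ and $(A_{r'}-m')\cap\mathbb N$ to coincide, hence their consecutive-gap sequences (which are tails of $(g^{(r)}_i)_{i}$ and $(g^{(r')}_i)_{i}$) to coincide. For $r=r'$ with $m\neq m'$, strict monotonicity of the gaps makes this impossible, so each orbit is infinite; for $r\neq r'$ a common gap value would contradict $V_r\cap V_{r'}=\varnothing$, so the orbits are pairwise disjoint. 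This yields infinitely many pairwise disjoint infinite $\sigma_\varphi$-orbits and therefore $\mathfrak{o}(\sigma_\varphi)=\infty$. I expect the main obstacle to be precisely the disjointness bookkeeping of this last step: translating equality of two orbit points (a statement about \emph{all} coordinates of $X^\Gamma$) into coincidence of gap sequences, and then extracting the contradiction from the disjoint value sets. Everything else is the same ``aperiodic marking'' idea already used in Lemma \ref{salam10}.
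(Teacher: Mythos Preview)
Your proposal is correct and follows the same overall strategy as the paper: fix a wandering point $\theta$, work only along the coordinates $\beta_n=\varphi^n(\theta)$ (where $\sigma_\varphi$ acts as the one-sided shift), and plant infinitely many aperiodic $\{p,q\}$-patterns there whose shifts can be told apart. The paper's concrete choice is, for each $s\geq1$, the pattern $p^s\,q\,p^s\,q^2\,p^s\,q^3\cdots$; injectivity of each orbit comes from the growing $q$-blocks, and disjointness from the observation that the maximal $p$-run length in any shift of $x^s$ recovers $s$. Your encoding via strictly increasing gap sequences drawn from pairwise disjoint value sets $V_r$ achieves the same two goals and makes the disjointness step particularly clean (a single shared gap value already yields the contradiction). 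The arguments are equivalent in spirit; yours trades an explicit pattern for a slightly more abstract but tidier bookkeeping device.
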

\begin{proof}
Consider $\theta\in\Gamma$ with infinite $\{\varphi^n(\theta):n\geq0\}$ and choose 
distinct $p,q\in X$, thus $(\varphi^n(\theta))_{n\geq0}$ is a one-to-one sequence. For $s\geq1$ let:
\[x_\alpha^s:=\left\{\begin{array}{lc} p & \alpha\in
	\{\varphi^n(\theta):\exists k\geq0\:(ks+\frac{k(k+1)}{2}<i\leq ks+\frac{k(k+1)}{2}+s)\}\:, \\
	q & {\rm otherwise}\:,\end{array}\right.\]
so:

$(x_{\varphi(\theta)}^s,x_{\varphi^2(\theta)}^s,x_{\varphi^3(\theta)}^s,\cdots)$
\[=(\:\underbrace{p,\cdots,p}_{s\:{\rm times}},q,
\underbrace{p,\cdots,p}_{s\:{\rm times}},q,q,\underbrace{p,\cdots,p}_{s\:{\rm times}},q,q,q,
\underbrace{p,\cdots,p}_{s\:{\rm times}},q,q,q,q,\cdots).\]
Let $x^s:=(x_\alpha^s)_{\alpha\in\Gamma}$.
Now we have the following steps:
\\
{\bf Step 1.} For $s\geq1$, the sequence $(\sigma_\varphi^n(x^s))_{n\geq1}$ is one-to-one: 
	Consider $j>i\geq0$, then
	$i<js+\frac{j(j+1)}{2}+s$, so there exists $t\geq1$ with $i+t=js+\frac{j(j+1)}{2}+s$ moreover 
	\[js+\frac{j(j+1)}{2}+s<\underbrace{js+\frac{j(j+1)}{2}+s+(j-i)}_{j+t}<(j+1)s+\frac{(j+1)(j+2)}{2}\:,\]
	which show $x^s_{\varphi^{i+t}(\theta)}=p$ and $x^s_{\varphi^{j+t}(\theta)}=q$ and:
	\[x^s_{\varphi^{i+t}(\theta)}\neq x^s_{\varphi^{j+t}(\theta)}\:.\tag{*}\]
	Using (*) we have $\sigma_\varphi^i((x_\alpha^s)_{\alpha\in\Gamma})\neq
	\sigma_\varphi^j((x_\alpha^s)_{\alpha\in\Gamma})$, thus 
	$(\sigma_\varphi^n(x^s))_{n\geq1}$ is a one-to-one sequence. 
\\
{\bf Step 2.} $(\sigma_\varphi^n(x^1))_{n\geq1}$, $(\sigma_\varphi^n(x^2))_{n\geq1}$, 
	$(\sigma_\varphi^n(x^3))_{n\geq1}$, ... are pairwise disjoint sequences:
	consider $s\geq r\geq1$ and $i,j\geq0$ with $\sigma_\varphi^i(x^s)=\sigma_\varphi^j(x^r)$.
	Choose $m\geq0$ with $i+m=is+\frac{i(i+1)}{2}+1$, now
	we have:
	\begin{eqnarray*}
	\sigma_\varphi^i(x^s)=\sigma_\varphi^j(x^r) & \Rightarrow &
		(\forall\alpha\in\Gamma\:(x^s_{\varphi^i(\alpha)}=x^r_{\varphi^j(\alpha)})) \\
	& \Rightarrow & (\forall n\geq0\: (x^s_{\varphi^{i+n}(\theta)}=x^r_{\varphi^{j+n}(\theta)})) \\
	& \Rightarrow & (\forall k\geq0\: (x^s_{\varphi^{i+m+k}(\theta)}=x^r_{\varphi^{j+m+k}(\theta)})) \\
	& \Rightarrow & (\forall k\in\{0,\ldots,s-1\}\:(p=x^s_{\varphi^{i+m+k}(\theta)}=
	x^r_{\varphi^{j+m+k}(\theta)})) 
	\end{eqnarray*}
	using $x^r_{\varphi^{j+m}(\theta)}=x^r_{\varphi^{j+m+1}(\theta)}=\cdots=
	x^r_{\varphi^{j+m+{s-1}}(\theta)}=p$ and the way of definition of $x^r$ we have $s\leq r$,
	thus $s=r$, and $\sigma_\varphi^i(x^s)=\sigma_\varphi^j(x^s)$ which leads to
$i=j$ by Step 1.
\\
Using the above two steps 
$(\sigma_\varphi^n(x^1))_{n\geq1}$, $(\sigma_\varphi^n(x^2))_{n\geq1}$, 
	$(\sigma_\varphi^n(x^3))_{n\geq1}$, ... are pairwise disjoint infinite sequences
	which leads to $\mathfrak{o}(\sigma_\varphi)=\infty$.
\end{proof}
\begin{lemma}\label{salam30}
Let $W(\varphi)=\varnothing$, and
$\varphi$ is not quasi-periodic,
then $\mathfrak{o}(\sigma_\varphi)=\infty$.
\end{lemma}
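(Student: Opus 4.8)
The plan is to exploit the identity $\sigma_\varphi^n(x)=x\circ\varphi^n$, reading each $x\in X^\Gamma$ as a function $\Gamma\to X$: a point $x$ lies in $W(\sigma_\varphi)$ exactly when the maps $x\circ\varphi^n$ ($n\geq0$) are pairwise distinct, and the orbits of $x$ and $y$ are disjoint exactly when $x\circ\varphi^n\neq y\circ\varphi^m$ for all $n,m\geq0$. By the Note, the hypotheses ``$W(\varphi)=\varnothing$ and $\varphi$ not quasi-periodic'' say precisely that every $\varphi$-orbit is finite while $\sup\{|\{\varphi^n(\alpha):n\geq0\}|:\alpha\in\Gamma\}=\infty$. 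Since every point is then preperiodic, I would decompose $\Gamma$ into the components of the functional graph of $\varphi$; each component carries a unique cycle, and I record its length together with the depth $d(\alpha)$ of a point (its distance to the cycle of its component, so that $d(\varphi(\alpha))=\max(d(\alpha)-1,0)$). As an orbit has size (preperiod)$+$(cycle length), unbounded orbit sizes force one of two cases.

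\textbf{Case A: the cycle lengths are unbounded.} Then there are infinitely many pairwise disjoint cycles $C_1,C_2,\dots$ with $|C_k|\to\infty$. I split the indices into $N$ infinite classes, and for $j\leq N$ let $x^{(j)}$ place a single $p$ on one point of each $C_k$ in class $j$ and $q$ everywhere else. On each such cycle the single-$p$ pattern rotates with least period $|C_k|$, so $\sigma_\varphi^n(x^{(j)})=\sigma_\varphi^m(x^{(j)})$ forces $|C_k|\mid n-m$ for all $k$ in class $j$; as these lengths are unbounded this gives $n=m$, whence each orbit is infinite. For $i\neq j$ and any $n,m$, choose a cycle $C_k$ in class $i$: there $\sigma_\varphi^n(x^{(i)})$ carries a $p$ while $\sigma_\varphi^m(x^{(j)})$ is identically $q$, so the orbits are disjoint, giving $\mathfrak o(\sigma_\varphi)=\infty$.

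\textbf{Case B: the cycle lengths are bounded but the depths are unbounded} (forced, since orbit size is at most depth plus cycle length). The crucial observation is that the set of attained depths is downward closed (if $d(\alpha)=D\geq1$ then $d(\varphi(\alpha))=D-1$), hence equals $\mathbb{N}\cup\{0\}$. I would therefore build configurations depending only on depth: for $f_j:\mathbb{N}\cup\{0\}\to\{p,q\}$ set $x^{(j)}:=f_j\circ d$, so that $\sigma_\varphi^n(x^{(j)})(\alpha)=f_j(\max(d(\alpha)-n,0))$. Choosing each $f_j$ non-periodic makes the orbit of $x^{(j)}$ infinite: given $n>m$, pick $a$ with $f_j(a)\neq f_j(a+(n-m))$ and a point of depth $a+n$, which separates the two iterates. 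Choosing the $f_j$ so that for $i\neq j$ no shift of $f_i$ eventually coincides with $f_j$ makes the orbits disjoint, since $\sigma_\varphi^n(x^{(i)})=\sigma_\varphi^m(x^{(j)})$ would force $f_i(b)=f_j(b+(n-m))$ for all large $b$. Infinitely many such non-periodic, pairwise ``eventually-shift-distinct'' sequences over $\{p,q\}$ exist, so once more $\mathfrak o(\sigma_\varphi)=\infty$.

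The main obstacle is Case B. Examples such as $\varphi(n)=\max(n-1,0)$ on $\mathbb{N}\cup\{0\}$ consist of a single infinite ``backward ray,'' so one cannot even find two disjoint long tails; the depth-based coding is precisely what bypasses this. Its correctness rests on the downward-closedness of the depth set (guaranteeing a point of every required depth with which to separate $n$ from $m$) and on the translation of orbit-disjointness into the eventual-shift-inequivalence of the coding functions $f_j$. Verifying those two points and exhibiting the required family of sequences is where the real work lies; Case A and the reformulation are routine by comparison.
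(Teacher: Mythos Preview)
Your argument is correct and genuinely different from the paper's. The paper gives a single, uniform construction: it first extracts points $\theta_1,\theta_2,\ldots$ whose initial orbit segments $\{\theta_i,\varphi(\theta_i),\ldots,\varphi^i(\theta_i)\}$ have size $i+1$ and are pairwise disjoint (a pigeonhole argument on a sufficiently long orbit, not written out in the paper), and then codes along these segments using prime powers to produce infinitely many points of $X^\Gamma$ with pairwise disjoint infinite $\sigma_\varphi$-orbits. No case split is made.

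You instead read off the functional graph: every point is preperiodic, so each component has a unique cycle and a well-defined depth function $d$. Your Case~A (unbounded cycle lengths) is cleaner than anything in the paper---placing a single $p$ on each of infinitely many disjoint cycles and using unbounded cycle lengths to kill any common period is very direct. Your Case~B (bounded cycle lengths, hence unbounded depths) is the more interesting idea: encoding via $x^{(j)}=f_j\circ d$ converts the problem on $\Gamma$ into a purely symbolic one about one-sided sequences over $\{p,q\}$, namely finding infinitely many non-periodic sequences no two of which are related by a shift. This reduction is nice because it works even when $\Gamma$ is a single ``backward ray'' with no two disjoint long tails, a situation where the paper's disjoint-segment step still works but requires a moment's thought.

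What each approach buys: the paper's prime-power coding avoids any structural analysis of $\varphi$ and handles everything at once, at the cost of a somewhat opaque combinatorial verification. Your approach is more transparent and modular, but splits into two cases and leaves the final symbolic step (exhibiting an infinite family of non-periodic, pairwise shift-inequivalent $\{p,q\}$-sequences) as an assertion. That step is genuinely easy---for instance, the very sequences $x^s$ constructed in the preceding lemma, restricted to the orbit of $\theta$, do the job---but you should write down one explicit family to close the argument.
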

\begin{proof}
Since $W(\varphi)=\varnothing$, for all $\alpha\in\Gamma$, $\{\varphi^n(\alpha):n\geq0\}$
is finite. Since $\varphi$ is not quasi-periodic we have 
$\sup\left\{|\{\varphi^n(\alpha):n\geq0\}|:\alpha\in\Gamma\right\}=\infty$.
Thus there exist $\theta_1,\theta_2,\ldots\in\Gamma$ such that for all $i\geq 1$ the set
$\{\theta_i,\varphi(\theta_i),\ldots,\varphi^i(\theta_i)\}$ has $i+1$ elements, moreover 
for all $j\neq i$ we have $\{\theta_i,\varphi(\theta_i),\ldots,\varphi^i(\theta_i)\}\cap 
\{\theta_j,\varphi(\theta_j),\ldots,\varphi^j(\theta_j)\}=\varnothing$. For $n\geq1$ suppose
$u_n$ is the $n$th prime number and choose
distinct $p,q\in X$, now for $m\geq1$ let:
\[x^m_\alpha=\left\{\begin{array}{lc} p & \alpha\in\{\varphi^n(\theta_{u_m^t}):t\geq1,1\leq n< u_m^t\}\:,\\
q & {\rm otherwise}\:,\end{array}\right.\]
so:
{\small
\[\begin{array}{rcl}
 (p,q) & = & (x^1_{\varphi(\theta_2)},x^1_{\varphi^2(\theta_2)}) \\
(p,p,p,q) & = & (x^1_{\varphi(\theta_4)},x^1_{\varphi^2(\theta_4)},x^1_{\varphi^3(\theta_4)},
	x^1_{\varphi^4(\theta_4)}) \\
(p,p,p,p,p,p,p,q) 	& = & (x^1_{\varphi(\theta_8)},x^1_{\varphi^2(\theta_8)},x^1_{\varphi^3(\theta_8)},
	x^1_{\varphi^4(\theta_8)},x^1_{\varphi^5(\theta_8)},x^1_{\varphi^6(\theta_8)},x^1_{\varphi^7(\theta_8)}
	,x^1_{\varphi^8(\theta_8)}) \\
& \vdots & \\
(p,p,q) & = & (x^2_{\varphi(\theta_3)},x^2_{\varphi^2(\theta_3)},x^2_{\varphi^3(\theta_3)}) \\
(p,p,p,p,p,p,p,p,q) 	& = & (x^2_{\varphi(\theta_9)},x^2_{\varphi^2(\theta_9)},x^2_{\varphi^3(\theta_9)},
	x^2_{\varphi^4(\theta_9)},x^2_{\varphi^5(\theta_9)},x^2_{\varphi^6(\theta_9)},x^2_{\varphi^7(\theta_9)}
	,x^2_{\varphi^8(\theta_9)},x^2_{\varphi^9(\theta_9)}) \\
& \vdots & \\
(p,p,p,p,q) 	& = & (x^3_{\varphi(\theta_5)},x^3_{\varphi^2(\theta_5)},x^3_{\varphi^3(\theta_5)},
	x^3_{\varphi^4(\theta_5)},x^3_{\varphi^5(\theta_5)}) \\
& \vdots &  \\
(\underbrace{p,\cdots,p}_{u_m-1{\rm \: times}},q) & = &  (x^m_{\varphi(\theta_{u_m})},
	x^m_{\varphi^2(\theta_{u_m})},\cdots,x^m_{\varphi^{u_m}(\theta_{u_m})}) \\
(\underbrace{p,\cdots,p}_{u_m^2-1{\rm \: times}},q) & = & 
	 (x^m_{\varphi(\theta_{u^2_m})},x^m_{\varphi^2(\theta_{u^2_m})},
	 \cdots,x^m_{\varphi^{u^2_m}(\theta_{u^2_m})}) \\
(\underbrace{p,\cdots,p}_{u_m^3-1{\rm \: times}},q) & = &  (x^m_{\varphi(\theta_{u^3_m})},
	x^m_{\varphi^2(\theta_{u^3_m})}, \cdots,x^m_{\varphi^{u^3_m}(\theta_{u^3_m})}) \\
& \vdots & 
\end{array}\]}
For $m\geq1$, let $x^m:=(x_\alpha^m)_{\alpha\in\Gamma}$. Now we have the following steps:
\\
{\bf Step 1.} For $m\geq1$, the sequence $(\sigma_\varphi^n(x^m))_{n\geq1}$ is one-to-one: 
	Consider $j\geq i\geq1$ with $\sigma_\varphi^i(x^m)=\sigma_\varphi^j(x^m)$, choose
	$t,l\geq1$ such that $j+l=u_m^t$, so:
	\begin{eqnarray*}
	\sigma_\varphi^i(x^m)=\sigma_\varphi^j(x^m) & \Rightarrow &
		(\forall\alpha\in\Gamma\:(x^m_{\varphi^i(\alpha)}=x^m_{\varphi^j(\alpha)})) \\
	& \Rightarrow & (\forall k\geq0\:\forall s\geq1\: 
		(x^m_{\varphi^{i+k}(\theta_{u_m^s})}=x^m_{\varphi^{j+k}(\theta_{u_m^s})})) \\
	& \Rightarrow & (x^m_{\varphi^{i+l}(\theta_{u_m^t})}=
		x^m_{\varphi^{j+l}(\theta_{u_m^t})}=x^m_{\varphi^{u_m^t}(\theta_{u_m^t})}=q) 
	\end{eqnarray*}
	using $x^m_{\varphi^{i+l}(\theta_{u_m^t})}=q$ and $1\leq i+l\leq j+l=u_m^t$
	considering the way of definition of $x^m$ we have $i+l=u_m^t=j+l$, thus $i=j$ and  
	the sequence $(\sigma_\varphi^n(x^m))_{n\geq1}$ is one-to-one.
\\
{\bf Step 2.} $(\sigma_\varphi^n(x^1))_{n\geq1}$, $(\sigma_\varphi^n(x^2))_{n\geq1}$, 
	$(\sigma_\varphi^n(x^3))_{n\geq1}$, ... are pairwise disjoint sequences:
	Consider $r,m\geq1$ and $i\geq j\geq1$ with $\sigma_\varphi^i(x^m)=\sigma_\varphi^j(x^r)$. 
	Choose $l,t\geq1$ with $i+l=u_m^t-1$, so:
	\begin{eqnarray*}
	\sigma_\varphi^i(x^m)=\sigma_\varphi^j(x^r) & \Rightarrow &
		(\forall\alpha\in\Gamma\:(x^m_{\varphi^i(\alpha)}=x^r_{\varphi^j(\alpha)})) \\
	& \Rightarrow & (\forall k\geq0\:\forall s\geq1\: 
		(x^m_{\varphi^{i+k}(\theta_{u_m^s})}=x^m_{\varphi^{j+k}(\theta_{u_r^s})})) \\
	& \Rightarrow & p=x^m_{\varphi^{i+l}(\theta_{u_m^t})}=	x^r_{\varphi^{j+l}(\theta_{u_m^t})}  \\
	& \Rightarrow & \varphi^{j+l}(\theta_{u_m^t})\in\{\varphi^v(u_r^w):w\geq1,1\leq v<u_r^w\} \\
	& \Rightarrow & (\exists w\geq1\:(\varphi^{j+l}(\theta_{u_m^t})\in
		\{\varphi^v(u_r^w):1\leq v< u_r^w\})) \\
	& \Rightarrow &  (\exists w\geq1\:\{\varphi^v(\theta_{u_m^t}):1\leq v< u_m^t\}\cap 
		\{\varphi^v(u_r^w):1\leq v< u_r^w\}) \\
	& & \SP\SP\SP\SP\SP\SP\SP\SP\SP\SP({\rm since \:} 1\leq j+l\leq i+l<u_m^t) \\
	& \Rightarrow &  (\exists w\geq1\:u_m^t=u_r^w)\:({\rm use \: the \: way \: of \: choosing \:}\theta_v{\rm s}) \\
	& \Rightarrow & u_m=u_r \: ({\rm since \:}u_m{\rm \: and \:}u_r{\rm \: are \: prime \: numbers}) \\
	& \Rightarrow & m=r
	\end{eqnarray*}
	thus  $m=r$
	and $\sigma_\varphi^i(x^m)=\sigma_\varphi^j(x^m)$ which leads to
$i=j$ by Step 1.
\\
By the above two steps 
$(\sigma_\varphi^n(x^1))_{n\geq1}$, $(\sigma_\varphi^n(x^2))_{n\geq1}$, 
	$(\sigma_\varphi^n(x^3))_{n\geq1}$, ... are pairwise disjoint infinite sequences
	which leads to $\mathfrak{o}(\sigma_\varphi)=\infty$.
\end{proof}
\begin{theorem}\label{salam40}
The following statements are equivalent:
\begin{itemize}
\item[1.] $W(\sigma_\varphi)=\varnothing$ (i.e., $
\mathfrak{o}(\sigma_\varphi)={\rm ent}_{\rm set}( \sigma_\varphi)=0$),
\item[2.] $\varphi$ is quasi-periodic,
\item[3.] ${\rm ent}_{\rm set}( \sigma_\varphi)<\infty$ (i.e., $\mathfrak{o}(\sigma_\varphi)<\infty$).
\end{itemize}
\end{theorem}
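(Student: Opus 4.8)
The plan is to establish the cycle of implications $(1)\Rightarrow(3)\Rightarrow(2)\Rightarrow(1)$. The three preceding lemmas already supply the heavy lifting for every situation in which $\varphi$ fails to be quasi-periodic, so the only genuinely new argument I need is the direction $(2)\Rightarrow(1)$, where quasi-periodicity of $\varphi$ forces $W(\sigma_\varphi)=\varnothing$.

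The implication $(1)\Rightarrow(3)$ is immediate from the background facts: $W(\sigma_\varphi)=\varnothing$ is equivalent to $\mathfrak{o}(\sigma_\varphi)=0$, and since ${\rm ent}_{\rm set}(\sigma_\varphi)=\mathfrak{o}(\sigma_\varphi)$ we obtain ${\rm ent}_{\rm set}(\sigma_\varphi)=0<\infty$ (which also justifies the parenthetical identities in item $(1)$). For $(3)\Rightarrow(2)$ I would argue by contraposition. Assume $\varphi$ is not quasi-periodic and split into two cases according to the Note and the definition of $W(\varphi)$. If $W(\varphi)\neq\varnothing$, then Lemma \ref{salam20} gives $\mathfrak{o}(\sigma_\varphi)=\infty$. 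If instead $W(\varphi)=\varnothing$, then $\varphi$ is a non-quasi-periodic map all of whose orbits are finite, so Lemma \ref{salam30} again yields $\mathfrak{o}(\sigma_\varphi)=\infty$. In either case ${\rm ent}_{\rm set}(\sigma_\varphi)=\mathfrak{o}(\sigma_\varphi)=\infty$, so $(3)$ fails.

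The crux is $(2)\Rightarrow(1)$. First I would record the identity $\sigma_\varphi^{\,k}=\sigma_{\varphi^k}$ for every $k\geq0$, which is read off directly from the definition since $\sigma_\varphi^{\,k}((x_\alpha)_{\alpha\in\Gamma})=(x_{\varphi^k(\alpha)})_{\alpha\in\Gamma}$. Now if $\varphi$ is quasi-periodic, the Note provides $n>m\geq1$ with $\varphi^n=\varphi^m$, whence $\sigma_\varphi^{\,n}=\sigma_{\varphi^n}=\sigma_{\varphi^m}=\sigma_\varphi^{\,m}$; thus $\sigma_\varphi$ is itself quasi-periodic. Applying the Note to $g=\sigma_\varphi$ then gives $\sup\{|\{\sigma_\varphi^{\,n}(x):n\geq0\}|:x\in X^\Gamma\}<\infty$, so every orbit of $\sigma_\varphi$ is finite and therefore $W(\sigma_\varphi)=\varnothing$, closing the cycle.

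I expect no serious obstacle here: the two hard constructions, producing infinitely many pairwise disjoint infinite $\sigma_\varphi$-orbits, have already been isolated in Lemmas \ref{salam20} and \ref{salam30}, and the remaining content of the theorem is the clean observation that quasi-periodicity propagates from $\varphi$ to $\sigma_\varphi$ via $\sigma_\varphi^{\,k}=\sigma_{\varphi^k}$. The only point requiring a little care is keeping the trichotomy on $\varphi$ (infinite orbit present, all orbits finite but unbounded, or bounded) aligned with which lemma applies, so that the contrapositive in $(3)\Rightarrow(2)$ exhausts all non-quasi-periodic $\varphi$.
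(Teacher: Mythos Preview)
Your proof is correct and follows essentially the same route as the paper: the contrapositive of $(3)\Rightarrow(2)$ via Lemmas~\ref{salam20} and~\ref{salam30}, and the observation that $\varphi^n=\varphi^m$ forces $\sigma_\varphi^n=\sigma_\varphi^m$ for $(2)\Rightarrow(1)$. The only cosmetic difference is that the paper also records a separate direct argument for $(1)\Rightarrow(2)$ invoking Lemma~\ref{salam10}, whereas your cycle $(1)\Rightarrow(3)\Rightarrow(2)\Rightarrow(1)$ renders that lemma unnecessary here.
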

\begin{proof}
``(1) $\Rightarrow$ (2)'':
Suppose $W(\sigma_\varphi)=\varnothing$, thus by Lemma~\ref{salam10}  we have
$W(\varphi)=\varnothing$. Since $W(\sigma_\varphi)=\varnothing$, we have
$\mathfrak{o}(\sigma_\varphi)=0$. Using $\mathfrak{o}(\sigma_\varphi)=0$,
$W(\varphi)=\varnothing$ and Lemma~\ref{salam30}, $\varphi$ is quasi-periodic.
\\
``(2) $\Rightarrow$ (1)'':
If there exist $n>m\geq1$ with $\varphi^n=\varphi^m$, then for all
$(x_\alpha)_{\alpha\in\Gamma}\in X^\Gamma$ we have
$\sigma_\varphi^n((x_\alpha)_{\alpha\in\Gamma})=
(x_{\varphi^n(\alpha)})_{\alpha\in\Gamma}=(x_{\varphi^m(\alpha)})_{\alpha\in\Gamma}=
\sigma_\varphi^m((x_\alpha)_{\alpha\in\Gamma})$ which shows 
$(x_\alpha)_{\alpha\in\Gamma}\notin W(\sigma_\varphi)$.
\\
``(3) $\Rightarrow$ (2)'': By Lemmas~\ref{salam20} and \ref{salam30}, if $\varphi$ is not quasi-periodic,
then $\mathfrak{o}(\sigma_\varphi)=\infty$.
\end{proof}
\begin{corollary}\label{salam45}
By Theorem~\ref{salam40} we have:
\[{\rm ent}_{\rm set}( \sigma_\varphi)=\left\{\begin{array}{lc} 0 & \varphi{\rm \: is \: quasi-periodic}\:,\\
\infty & {\rm otherwise}\:.\end{array}\right.\]
\end{corollary}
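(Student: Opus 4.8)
The plan is to deduce the closed-form expression directly from Theorem~\ref{salam40}, together with the identity ${\rm ent}_{\rm set}(g)=\mathfrak{o}(g)$ recorded in the background subsection. Since the theorem already establishes the three-way equivalence among the conditions ``$W(\sigma_\varphi)=\varnothing$'', ``$\varphi$ is quasi-periodic'', and ``${\rm ent}_{\rm set}(\sigma_\varphi)<\infty$'', the only remaining task is a two-case dichotomy on whether or not $\varphi$ is quasi-periodic. No new dynamical construction is needed; everything is assembled from the already-proven lemmas and theorem.

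First I would handle the quasi-periodic case. If $\varphi$ is quasi-periodic, then statement (2) of Theorem~\ref{salam40} holds, so by the equivalence statement (1) holds as well, and statement (1) asserts precisely that ${\rm ent}_{\rm set}(\sigma_\varphi)=0$. This yields the top line of the formula immediately.

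Next I would treat the non-quasi-periodic case, arguing by contradiction. Suppose $\varphi$ is not quasi-periodic, so statement (2) fails. If ${\rm ent}_{\rm set}(\sigma_\varphi)$ were finite, then statement (3) would hold, and the equivalence in Theorem~\ref{salam40} would force statement (2), contradicting our assumption. Hence ${\rm ent}_{\rm set}(\sigma_\varphi)$ cannot be finite, so ${\rm ent}_{\rm set}(\sigma_\varphi)=\infty$, which gives the bottom line.

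The only mildly conceptual point is the observation that the set-theoretical entropy of a generalized shift admits no intermediate finite positive value: it is either $0$ or $\infty$. This gap is exactly what the equivalence of (1) and (3) encodes, since a \emph{finite} entropy is forced down to $0$. I expect no genuine obstacle, as the corollary is purely a repackaging of Theorem~\ref{salam40}; the substantive work lies in Lemmas~\ref{salam10}, \ref{salam20}, and \ref{salam30}, which supply the $\mathfrak{o}(\sigma_\varphi)=\infty$ conclusion in every non-quasi-periodic situation.
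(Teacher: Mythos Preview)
Your proposal is correct and matches the paper's approach: the paper simply states the formula as an immediate consequence of Theorem~\ref{salam40}, without giving any additional argument, and your two-case dichotomy is exactly the unpacking of that citation.
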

\section{Contravariant set-theoretical entropy of $\sigma_\varphi:X^\Gamma\to X^\Gamma$}
\noindent For $\alpha,\beta\in\Gamma$ let $\alpha\Re\beta$ if and only if there exists $n\geq1$
with $\varphi^n(\alpha)=\varphi^n(\beta)$. Then $\Re$ is an equivalence relation on 
$\Gamma$, moreover it's evident that $\alpha\Re\beta$ if and only if $\varphi(\alpha)\Re\varphi(\beta)$.
In this section we prove that for all $x\in X^\Gamma$, $\sigma_\varphi^{-1}(x)$ is finite,
if and only if either $\Gamma=\varphi(\Gamma)$ or ``$X$ and 
$\Gamma\setminus\varphi(\Gamma)$ are finite''.
Moreover if for all $x\in X^\Gamma$, $\sigma_\varphi^{-1}(x)$ is finite, then 
${\rm ent}_{\rm cset}( \sigma_\varphi)\in\{0,\infty\}$ with ${\rm ent}_{\rm cset}( \sigma_\varphi)=0$
if and only if there exists $n\geq1$ such that $\varphi^n(\alpha)\Re\alpha$ for all
$\alpha\in\Gamma$.
\begin{remark}\label{salam5}
The generalized shift $\sigma_\varphi:X^\Gamma\to X^\Gamma$ is
on-to-one (resp. onto) if and only if $\varphi:\Gamma\to\Gamma$ is onto
(resp. one-to-one) \cite{AHK, set}.
\end{remark}
\begin{note}\label{salam50}
Consider $\widetilde{\varphi}:\mathop{\frac{\Gamma}{\Re}\to\frac{\Gamma}{\Re}}\limits_{[\alpha]_\Re\mapsto[\varphi(\alpha)]_\Re}$ and note that 
(for $\alpha\in\Gamma$ let $[\alpha]_\Re=\{y\in\Gamma:x\Re y\}$ and $\frac{\Gamma}{\Re}=\{[\lambda]_\Re:\lambda\in\Gamma\}$):
\[\mathop{\mathfrak{f}:{\rm sc}(\sigma_\varphi)\to X^{\frac{\Gamma}{\Re}}}\limits_{\SP\SP\SP\SP
(x_\alpha)_{\alpha\in\Gamma}\mapsto(x_\alpha)_{[\alpha]_\Re\in\frac{\Gamma}{\Re}}}\]
is well-defined, since for $(x_\alpha)_{\alpha\in\Gamma}\in{\rm sc}(\sigma_\varphi)$
and $\theta,\beta\in\Gamma$ with $\theta\Re\beta$, there exists $n\geq1$ and 
$(y_\alpha)_{\alpha\in\Gamma}\in X^\Gamma$ with $\varphi^n(\theta)=\varphi^n(\beta)$
and $(y_{\varphi^n(\alpha)})_{\alpha\in\Gamma}=\sigma_\varphi^n((y_\alpha)_{\alpha\in\Gamma})
=(x_\alpha)_{\alpha\in\Gamma}$, thus $x_\theta=y_{\varphi^n(\theta)}=y_{\varphi^n(\beta)}=x_\beta$.
Now we have:
\\
1. $\mathfrak{f}:{\rm sc}(\sigma_\varphi)\to X^{\frac{\Gamma}{\Re}}$ is one-to-one.
\\
2. The following diagram commutes:
\[\xymatrix{{\rm sc}(\sigma_\varphi) \ar[rr]^{\sigma_\varphi\restriction_{{\rm sc}(\sigma_\varphi)}}
\ar[d]_{\mathfrak{f}} && {\rm sc}(\sigma_\varphi) \ar[d]^{\mathfrak{f}} \\
X^{\frac{\Gamma}{\Re}} \ar[rr]^{\sigma_{\widetilde{\varphi}}} && X^{\frac{\Gamma}{\Re}}}
\]
3. Using  Remark~\ref{salam5}, since $\widetilde{\varphi}:\frac{\Gamma}{\Re}\to\frac{\Gamma}{\Re}$ is one-to-one, $\sigma_{\widetilde{\varphi}}:X^{\frac{\Gamma}{\Re}}\to X^{\frac{\Gamma}{\Re}}$ is onto.
\end{note}
\begin{lemma}\label{salam55}
The generalized shift $\sigma_\varphi:X^\Gamma\to X^\Gamma$  is finite fibre,
if and only if at least one of the following conditions hold:
\begin{itemize}
\item $X$ and $\Gamma\setminus\varphi(\Gamma)$ are finite,
\item $\Gamma=\varphi(\Gamma)$.
\end{itemize}
\end{lemma}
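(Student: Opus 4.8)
The plan is to reduce the whole statement to computing the cardinality of a single, suitably chosen, nonempty fibre. First I would describe the fibres of $\sigma_\varphi$ explicitly. Fix $y=(y_\alpha)_{\alpha\in\Gamma}\in X^\Gamma$; then $x=(x_\alpha)_{\alpha\in\Gamma}$ lies in $\sigma_\varphi^{-1}(y)$ exactly when $x_{\varphi(\alpha)}=y_\alpha$ for every $\alpha\in\Gamma$. Thus each coordinate $x_\beta$ with $\beta\in\varphi(\Gamma)$ is forced (and the fibre is empty precisely when some such constraint is inconsistent, i.e. $y_{\alpha_1}\neq y_{\alpha_2}$ for some $\alpha_1,\alpha_2$ with $\varphi(\alpha_1)=\varphi(\alpha_2)$), whereas each coordinate $x_\beta$ with $\beta\in\Gamma\setminus\varphi(\Gamma)$ is completely free. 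Consequently, whenever $\sigma_\varphi^{-1}(y)\neq\varnothing$ there is a canonical bijection $\sigma_\varphi^{-1}(y)\cong X^{\Gamma\setminus\varphi(\Gamma)}$, given by restricting $x$ to the free coordinates. The key point is that a nonempty fibre always exists: since $X$ and $\Gamma$ are nonempty we may pick $x\in X^\Gamma$, and then $x\in\sigma_\varphi^{-1}(\sigma_\varphi(x))$, so $X^{\Gamma\setminus\varphi(\Gamma)}$ is realized as the cardinality of an actual fibre.

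With this description, both directions follow from cardinal arithmetic. For the ``if'' direction, if $\Gamma=\varphi(\Gamma)$ then $\Gamma\setminus\varphi(\Gamma)=\varnothing$, so $X^{\Gamma\setminus\varphi(\Gamma)}$ is a singleton and every fibre has at most one element; alternatively this is immediate from Remark~\ref{salam5}, since $\varphi$ onto makes $\sigma_\varphi$ one-to-one. If instead $X$ and $\Gamma\setminus\varphi(\Gamma)$ are both finite, then $X^{\Gamma\setminus\varphi(\Gamma)}$ is finite, whence every fibre (being empty or in bijection with this set) is finite.

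For the ``only if'' direction I would argue by contraposition on the disjunction: assume $\sigma_\varphi$ is finite fibre but $\Gamma\neq\varphi(\Gamma)$, and deduce that $X$ and $\Gamma\setminus\varphi(\Gamma)$ are finite. Since $S:=\Gamma\setminus\varphi(\Gamma)\neq\varnothing$ and, by the step above, some fibre is in bijection with $X^{S}$, the finite-fibre hypothesis forces $X^{S}$ to be finite. Now I invoke the standing convention that $|X|\geq2$: for a base set of size at least two raised to a nonempty exponent set $S$, finiteness of $X^{S}$ forces both $|X|<\infty$ and $|S|<\infty$, for if $X$ were infinite then fixing all but one coordinate already embeds $X$ into $X^{S}$, while if $S$ were infinite then $|X^{S}|\geq 2^{|S|}$ is infinite. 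This is exactly the desired conclusion.

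I expect the only genuinely delicate point to be establishing the bijection $\sigma_\varphi^{-1}(y)\cong X^{\Gamma\setminus\varphi(\Gamma)}$ for a nonempty $y$ together with the observation that such a nonempty fibre is guaranteed to occur; once that is in place the two implications are routine cardinality bookkeeping. The hypothesis $|X|\geq2$ is essential in the ``only if'' direction (for $|X|=1$ the shift would be trivially finite fibre irrespective of $\Gamma\setminus\varphi(\Gamma)$), so I would make sure to use it explicitly.
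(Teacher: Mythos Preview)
Your proposal is correct and follows essentially the same approach as the paper. Both arguments hinge on the observation that every nonempty fibre of $\sigma_\varphi$ is in bijection with $X^{\Gamma\setminus\varphi(\Gamma)}$ and then reduce the equivalence to elementary cardinality reasoning using $|X|\geq 2$; the paper exhibits the concrete nonempty fibre $\sigma_\varphi^{-1}((p)_{\alpha\in\Gamma})$ for a fixed $p\in X$, while you use $\sigma_\varphi^{-1}(\sigma_\varphi(x))$, but this is a cosmetic difference.
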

\begin{proof}
First suppose for all $x\in X^\Gamma$, $\sigma_\varphi^{-1}(x)$ is 
finite and $\Gamma\neq\varphi(\Gamma)$. Choose $p\in X$, for all 
$q=(q_\alpha)_{\alpha\in\Gamma\setminus\varphi(\Gamma)}\in X^{\Gamma\setminus\varphi(\Gamma)}$
let:
\[x_\alpha^q:=\left\{\begin{array}{lc} q_\alpha & \alpha\in\Gamma\setminus\varphi(\Gamma)\:, \\
p & {\rm otherwise}\:,\end{array}\right.\]
then
$\sigma_\varphi((x_\alpha^q)_{\alpha\in\Gamma})=(p)_{\alpha\in\Gamma}$.
So $\mathop{X^{\Gamma\setminus\varphi(\Gamma)}\to\sigma_\varphi^{-1}((p)_{\alpha\in\Gamma})}\limits_{q\mapsto
(x_\alpha^q)_{\alpha\in\Gamma}}$ is one-to-one, using finiteness of 
$\sigma_\varphi^{-1}((p)_{\alpha\in\Gamma})$, $X^{\Gamma\setminus\varphi(\Gamma)}$ is finite too. Both sets $\Gamma\setminus\varphi(\Gamma),X$ are finite
since $X^{\Gamma\setminus\varphi(\Gamma)}$ is finite, $X$ has at least two elements and $\Gamma\setminus\varphi(\Gamma)\neq
\varnothing$.
\\
Conversely, if $\Gamma=\varphi(\Gamma)$, then by Remark~\ref{salam5}, $\sigma_\varphi:X^\Gamma\to X^\Gamma$
is one-to-one, so for all $x\in X^\Gamma$ the set $\sigma_\varphi^{-1}(x)$  has at most one element
and is finite. Now suppose $X$ and $\Gamma\setminus\varphi(\Gamma)$ are finite. For
all $(x_\alpha)_{\alpha\in\Gamma},(y_\alpha)_{\alpha\in\Gamma}\in X^\Gamma$ we have:
\begin{eqnarray*}
(y_\alpha)_{\alpha\in\Gamma}\in\sigma_\varphi^{-1}(\sigma_\varphi((x_\alpha)_{\alpha\in\Gamma}))	
	& \Rightarrow & \sigma_\varphi((y_\alpha)_{\alpha\in\Gamma})
	=\sigma_\varphi((x_\alpha)_{\alpha\in\Gamma}) \\
& \Rightarrow & (y_{\varphi(\alpha)})_{\alpha\in\Gamma}=(x_{\varphi(\alpha)})_{\alpha\in\Gamma} \\
& \Rightarrow & \forall\beta\in\varphi(\Gamma)\: y_\beta=x_\beta \\
& \Rightarrow & (y_\alpha)_{\alpha\in\Gamma}\in\{(z_\alpha)_{\alpha\in\Gamma}\in X^\Gamma:
	\forall\alpha\in\varphi(\Gamma)\:z_\alpha=x_\alpha\}
\end{eqnarray*}
Hence 
\[|\sigma_\varphi^{-1}(\sigma_\varphi((x_\alpha)_{\alpha\in\Gamma}))|\leq|
\{(z_\alpha)_{\alpha\in\Gamma}\in X^\Gamma:
	\forall\alpha\in\varphi(\Gamma)\:z_\alpha=x_\alpha\}|=|X^{\Gamma\setminus\varphi(\Gamma)}|<\infty\:.\]
Thus for all $w\in X^\Gamma$, $\sigma_\varphi^{-1}(w)$ is finite.
\end{proof}
\begin{lemma}\label{salam56}
If $\sigma_\varphi:X^\Gamma\to X^\Gamma$ is finite fibre,
then $\sigma_{\widetilde{\varphi}}:X^{\frac{\Gamma}{\Re}}\to X^{\frac{\Gamma}{\Re}}$ is finite fibre.
\end{lemma}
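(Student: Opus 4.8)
The plan is to recognize $\sigma_{\widetilde{\varphi}}$ as a generalized shift with the same alphabet $X$ but index set $\frac{\Gamma}{\Re}$, and then to invoke the finite-fibre criterion of Lemma~\ref{salam55}, this time applied to the self-map $\widetilde{\varphi}:\frac{\Gamma}{\Re}\to\frac{\Gamma}{\Re}$ from Note~\ref{salam50}. By that criterion it suffices to verify that $\widetilde{\varphi}$ satisfies one of the two listed conditions, namely that either $\frac{\Gamma}{\Re}=\widetilde{\varphi}(\frac{\Gamma}{\Re})$, or both $X$ and $\frac{\Gamma}{\Re}\setminus\widetilde{\varphi}(\frac{\Gamma}{\Re})$ are finite. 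Since $\sigma_\varphi$ is assumed finite fibre, Lemma~\ref{salam55} applied to $\varphi$ itself leaves exactly two cases to analyze: either $\Gamma=\varphi(\Gamma)$, or $X$ and $\Gamma\setminus\varphi(\Gamma)$ are both finite. I would treat these in turn, in each case transferring the relevant property from $\varphi$ to $\widetilde{\varphi}$ through the quotient map $\pi:\Gamma\to\frac{\Gamma}{\Re}$ given by $\pi(\alpha)=[\alpha]_\Re$.

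In the first case $\Gamma=\varphi(\Gamma)$ says $\varphi$ is onto, and then for any $[\gamma]_\Re\in\frac{\Gamma}{\Re}$ one may write $\gamma=\varphi(\alpha)$, so that $[\gamma]_\Re=[\varphi(\alpha)]_\Re=\widetilde{\varphi}([\alpha]_\Re)$; hence $\widetilde{\varphi}$ is onto, $\frac{\Gamma}{\Re}=\widetilde{\varphi}(\frac{\Gamma}{\Re})$, and Lemma~\ref{salam55} gives that $\sigma_{\widetilde{\varphi}}$ is finite fibre. In the second case $X$ is finite (it is literally the same alphabet), so it remains only to bound $\frac{\Gamma}{\Re}\setminus\widetilde{\varphi}(\frac{\Gamma}{\Re})$. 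The key observation I would record is the inclusion
\[\frac{\Gamma}{\Re}\setminus\widetilde{\varphi}\left(\frac{\Gamma}{\Re}\right)\subseteq\left\{[\gamma]_\Re:\gamma\in\Gamma\setminus\varphi(\Gamma)\right\}=\pi\bigl(\Gamma\setminus\varphi(\Gamma)\bigr).\]
This follows at once by contraposition: if $\gamma\in\varphi(\Gamma)$, say $\gamma=\varphi(\alpha)$, then $[\gamma]_\Re=\widetilde{\varphi}([\alpha]_\Re)\in\widetilde{\varphi}(\frac{\Gamma}{\Re})$, so any class lying outside $\widetilde{\varphi}(\frac{\Gamma}{\Re})$ must already have a representative in $\Gamma\setminus\varphi(\Gamma)$. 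Since $\Gamma\setminus\varphi(\Gamma)$ is finite, its image $\pi(\Gamma\setminus\varphi(\Gamma))$ is finite, and therefore so is the subset $\frac{\Gamma}{\Re}\setminus\widetilde{\varphi}(\frac{\Gamma}{\Re})$. With both $X$ and $\frac{\Gamma}{\Re}\setminus\widetilde{\varphi}(\frac{\Gamma}{\Re})$ finite, Lemma~\ref{salam55} again yields that $\sigma_{\widetilde{\varphi}}$ is finite fibre.

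I expect the argument to be essentially routine once the case split is set up, since each condition of Lemma~\ref{salam55} transfers cleanly along $\pi$. The only point requiring care — and the nearest thing to an obstacle — is the displayed inclusion: one must avoid trying to prove the reverse containment or an equality, which need not hold, because a non-image element $\gamma\in\Gamma\setminus\varphi(\Gamma)$ may be $\Re$-equivalent to some $\varphi(\alpha)$ and then $[\gamma]_\Re$ does lie in $\widetilde{\varphi}(\frac{\Gamma}{\Re})$. What is needed, and all that is true in general, is the one-directional statement that a class outside $\widetilde{\varphi}(\frac{\Gamma}{\Re})$ forces its representative outside $\varphi(\Gamma)$; this suffices to push finiteness from $\Gamma\setminus\varphi(\Gamma)$ down to $\frac{\Gamma}{\Re}\setminus\widetilde{\varphi}(\frac{\Gamma}{\Re})$.
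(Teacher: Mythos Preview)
Your proposal is correct and follows essentially the same approach as the paper: apply Lemma~\ref{salam55} to $\varphi$ to obtain the two cases, transfer surjectivity of $\varphi$ to $\widetilde{\varphi}$ in the first case, and in the second case bound $\frac{\Gamma}{\Re}\setminus\widetilde{\varphi}(\frac{\Gamma}{\Re})$ by the (finite) set $\Gamma\setminus\varphi(\Gamma)$ before invoking Lemma~\ref{salam55} again. The only cosmetic difference is that the paper records the slightly sharper observation that each class $A\in\frac{\Gamma}{\Re}\setminus\widetilde{\varphi}(\frac{\Gamma}{\Re})$ satisfies $A\subseteq\Gamma\setminus\varphi(\Gamma)$ (which your contrapositive argument in fact also proves), whereas you extract only the inclusion $\frac{\Gamma}{\Re}\setminus\widetilde{\varphi}(\frac{\Gamma}{\Re})\subseteq\pi(\Gamma\setminus\varphi(\Gamma))$; either version suffices.
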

\begin{proof}
Suppose for all $w\in X^\Gamma$, $\sigma_\varphi^{-1}(w)$ is finite, then by Lemma~\ref{salam55} we have the following cases:
\\
{\bf Case 1.} $\Gamma=\varphi(\Gamma)$: In this case we have
$\widetilde{\varphi}(\frac{\Gamma}{\Re})
=\{[\varphi(\alpha)]_\Re:\alpha\in\Gamma\}=\{[\alpha]_\Re:\alpha\in\varphi(\Gamma)\}=
\{[\alpha]_\Re:\alpha\in\Gamma\}=\frac{\Gamma}{\Re}$.
\\
{\bf Case 2.} $X$ and $\Gamma\setminus\varphi(\Gamma)$ are finite: For all $A\in\frac{\Gamma}{\Re}\setminus\widetilde{\varphi}(\frac{\Gamma}{\Re})$ we have $A\subseteq \Gamma\setminus\varphi(\Gamma)$ which leads to $|\frac{\Gamma}{\Re}\setminus\widetilde{\varphi}(\frac{\Gamma}{\Re})|\leq|\bigcup(\frac{\Gamma}{\Re}\setminus\widetilde{\varphi}(\frac{\Gamma}{\Re})|\leq|\Gamma\setminus\varphi(\Gamma)|$ and $\frac{\Gamma}{\Re}\setminus\widetilde{\varphi}(\frac{\Gamma}{\Re})$ is finite in this case.
\\
Using the above two cases and Lemma~\ref{salam55}, 
$\sigma_{\widetilde{\varphi}}$ is finite fibre.
\end{proof}
\begin{lemma}\label{salam57}
We have
$\mathfrak{a}( \sigma_\varphi)=\mathfrak{a}( \sigma_{\widetilde{\varphi}})$. 
In particular by Lemma~\ref{salam56}, if $\sigma_\varphi:X^\Gamma\to X^\Gamma$ is finite fibre,
then 
${\rm ent}_{\rm cset}( \sigma_\varphi)={\rm ent}_{\rm cset}( \sigma_{\widetilde{\varphi}})$.
\end{lemma}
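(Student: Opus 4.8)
The plan is to transport infinite anti-orbits across the injection $\mathfrak{f}:{\rm sc}(\sigma_\varphi)\to X^{\frac{\Gamma}{\Re}}$ of Note~\ref{salam50}, establishing the two inequalities $\mathfrak{a}(\sigma_\varphi)\leq\mathfrak{a}(\sigma_{\widetilde{\varphi}})$ and $\mathfrak{a}(\sigma_{\widetilde{\varphi}})\leq\mathfrak{a}(\sigma_\varphi)$ separately. The observation I would use throughout is that every infinite anti-orbit $\{y_n\}_{n\geq1}$ of a self-map $k:A\to A$ automatically lies in ${\rm sc}(k)=\bigcap_{n\geq1}k^n(A)$: from $k(y_{n+1})=y_n$ one gets $y_m=k^{j}(y_{m+j})\in k^j(A)$ for every $j\geq1$. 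Hence anti-orbits of $\sigma_\varphi$ live exactly where $\mathfrak{f}$ is defined, while, since $\sigma_{\widetilde{\varphi}}$ is onto by Note~\ref{salam50}.3, the anti-orbits of $\sigma_{\widetilde{\varphi}}$ range over all of $X^{\frac{\Gamma}{\Re}}$.

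For $\mathfrak{a}(\sigma_\varphi)\leq\mathfrak{a}(\sigma_{\widetilde{\varphi}})$, I would take $j$ pairwise disjoint infinite anti-orbits $\{w^{(1)}_n\}_n,\dots,\{w^{(j)}_n\}_n$ of $\sigma_\varphi$. Each of their terms lies in ${\rm sc}(\sigma_\varphi)$ by the observation above, so applying $\mathfrak{f}$ and using the commuting square of Note~\ref{salam50}.2 gives $\sigma_{\widetilde{\varphi}}(\mathfrak{f}(w^{(i)}_{n+1}))=\mathfrak{f}(\sigma_\varphi(w^{(i)}_{n+1}))=\mathfrak{f}(w^{(i)}_n)$; thus each $\{\mathfrak{f}(w^{(i)}_n)\}_n$ is an anti-orbit of $\sigma_{\widetilde{\varphi}}$. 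Because $\mathfrak{f}$ is one-to-one by Note~\ref{salam50}.1, these image sequences remain infinite and pairwise disjoint, so $\mathfrak{a}(\sigma_{\widetilde{\varphi}})\geq j$; taking the supremum over $j$ yields the inequality.

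For the reverse inequality I would \emph{lift} rather than push forward, which neatly sidesteps having to prove that $\mathfrak{f}$ is onto. Given $j$ pairwise disjoint infinite anti-orbits $\{z^{(1)}_n\}_n,\dots,\{z^{(j)}_n\}_n$ of $\sigma_{\widetilde{\varphi}}$ in $X^{\frac{\Gamma}{\Re}}$, I would set $w^{(i)}_n:=\big((z^{(i)}_n)_{[\alpha]_\Re}\big)_{\alpha\in\Gamma}\in X^\Gamma$. Using $\widetilde{\varphi}([\alpha]_\Re)=[\varphi(\alpha)]_\Re$ together with the identity $\sigma_{\widetilde{\varphi}}(z^{(i)}_{n+1})=z^{(i)}_n$ evaluated at the class $[\alpha]_\Re$, one checks that $\sigma_\varphi(w^{(i)}_{n+1})=w^{(i)}_n$, so each $\{w^{(i)}_n\}_n$ is a genuine anti-orbit of $\sigma_\varphi$ in $X^\Gamma$. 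The assignment $z\mapsto w$ is injective (two tuples over $\frac{\Gamma}{\Re}$ that differ at a class $[\alpha]_\Re$ produce tuples differing in coordinate $\alpha$), so infiniteness and pairwise disjointness are preserved and $\mathfrak{a}(\sigma_\varphi)\geq j$. Combining the two inequalities gives $\mathfrak{a}(\sigma_\varphi)=\mathfrak{a}(\sigma_{\widetilde{\varphi}})$.

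Finally, for the ``in particular'' clause, if $\sigma_\varphi$ is finite fibre then Lemma~\ref{salam56} makes $\sigma_{\widetilde{\varphi}}$ finite fibre as well, so the identity ${\rm ent}_{\rm cset}=\mathfrak{a}$ recorded in the background applies to both maps and, with the equality just proved, yields ${\rm ent}_{\rm cset}(\sigma_\varphi)=\mathfrak{a}(\sigma_\varphi)=\mathfrak{a}(\sigma_{\widetilde{\varphi}})={\rm ent}_{\rm cset}(\sigma_{\widetilde{\varphi}})$. I expect the only delicate point to be the verification that the lifted sequence is again an anti-orbit—precisely where the definitions of $\Re$ and of $\widetilde{\varphi}$ interact—while everything else is routine bookkeeping about infinite one-to-one sequences and about disjointness being preserved under injective maps.
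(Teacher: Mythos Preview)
Your proposal is correct and follows essentially the same approach as the paper: both directions are handled exactly as in the paper's proof, pushing forward along the injection $\mathfrak{f}$ for $\mathfrak{a}(\sigma_\varphi)\leq\mathfrak{a}(\sigma_{\widetilde{\varphi}})$ and lifting via the map $x\mapsto (x_{[\alpha]_\Re})_{\alpha\in\Gamma}$ for the reverse inequality. Your explicit remark that anti-orbit terms automatically lie in ${\rm sc}(\sigma_\varphi)$ is stated a bit more carefully than in the paper, but otherwise the arguments coincide.
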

\begin{proof}
For $s\geq1$ suppose $(y_n^1)_{n\geq1},\ldots,(y_n^s)_{n\geq1}$ are pairwise
disjoint infinite $\sigma_\varphi-$anti-orbit sequences, then for all $i\in\{1,\ldots,s\}$ and
$n\geq1$ we have $y^i_n\in{\rm sc}(\sigma_\varphi)$. Using Note~\ref{salam50}, $\mathfrak f$
is one-to-one, thus $(\mathfrak{f}(y_n^1))_{n\geq1},\ldots,(\mathfrak{f}(y_n^s))_{n\geq1}$
are infinite pairwise disjoint sequences in $X^{\frac{\Gamma}{\Re}}$, moreover
 for all $i\in\{1,\ldots,s\}$ and
$n\geq1$ we have $\sigma_{\widetilde{\varphi}}(\mathfrak{f}(y_{n+1}^i))=\mathfrak{f}(\sigma_\varphi(y_{n+1}^i))=\mathfrak{f}(y_n^i)$. Thus  $(\mathfrak{f}(y_n^1))_{n\geq1},\ldots,(\mathfrak{f}(y_n^s))_{n\geq1}$
are infinite pairwise disjoint $\sigma_{\widetilde{\varphi}}-$anti-orbit sequences.
Therefore $\mathfrak{a}( \sigma_{\widetilde{\varphi}})\geq \mathfrak{a}( \sigma_\varphi)$.
\\
Now for $x=(x_\beta)_{\beta\in\frac{\Gamma}{\Re}}\in X^{\frac{\Gamma}{\Re}}$ let $w^x=(x_{[\alpha]_\Re})_{\alpha\in\Gamma}\in X^\Gamma$. 
So for all $x,y\in X^{\frac{\Gamma}{\Re}}$
if $w^x=w^y$, then $x=y$. Moreover for $x=(x_\beta)_{\beta\in\frac{\Gamma}{\Re}}\in X^{\frac{\Gamma}{\Re}}$ we have:
\begin{eqnarray*}
w^{\sigma_{\widetilde{\varphi}}(x)} & = &
	w^{(x_{\widetilde{\varphi}(\beta)})_{\beta\in\frac{\Gamma}{\Re}}}=
	(x_{\widetilde{\varphi}	
	([\alpha]_\Re)})_{\alpha\in\Gamma} \\
& = & (x_{[\varphi(\alpha)]_\Re})_{\alpha\in\Gamma}=
\sigma_\varphi((x_{[\alpha]_\Re})_{\alpha\in\Gamma})=\sigma_\varphi(w^x)\:.
\end{eqnarray*}
For $t\geq1$ suppose $(y_n^1)_{n\geq1},\ldots,(y_n^t)_{n\geq1}$ are pairwise
disjoint infinite $\sigma_{\widetilde{\varphi}}-$anti-orbit sequences, then 
$(w^{y_n^1})_{n\geq1},\ldots,(w^{y_n^t})_{n\geq1}$ are pairwise
disjoint infinite  $\sigma_\varphi-$anti-orbit sequences.
Thus $\mathfrak{a}( \sigma_\varphi)\geq\mathfrak{a}( \sigma_{\widetilde{\varphi}})$. 
\end{proof}
\begin{lemma}\label{salam60}
Suppose $\psi:\Gamma\to\Gamma$ is one-to-one and has at least one non-periodic point,
then $\mathfrak{a}(\sigma_\psi)=\infty$, thus if 
$\sigma_\psi:X^\Gamma\to X^\Gamma$ is finite fibre too, then
${\rm ent}_{\rm cset}( \sigma_\psi)=\infty$.
\end{lemma}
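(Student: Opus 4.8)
The plan is to mirror the strategy of Lemma~\ref{salam20}: from the single infinite $\psi$-orbit supplied by the hypotheses I would manufacture infinitely many pairwise disjoint infinite $\sigma_\psi$-anti-orbit sequences, distinguished by patterns of increasing block length, and then read off $\mathfrak{a}(\sigma_\psi)=\infty$. First I would record the reductions. Since $\psi$ is one-to-one, Remark~\ref{salam5} gives that $\sigma_\psi$ is onto, so $\mathrm{sc}(\sigma_\psi)=X^\Gamma$ and a $\sigma_\psi$-anti-orbit is simply a sequence $(x_n)_{n\geq1}$ in $X^\Gamma$ with $\sigma_\psi(x_{n+1})=x_n$ for all $n$. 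Fixing a non-periodic point $\theta$ of $\psi$ and using injectivity, the forward orbit $\theta_k:=\psi^k(\theta)$ ($k\geq0$) is a one-to-one sequence, i.e.\ an infinite set $O\subseteq\Gamma$ of distinct points with $\psi(\theta_k)=\theta_{k+1}$.

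Next I would isolate the mechanism for producing anti-orbits. Fix distinct $p,q\in X$. The key observation is that for any $g:\mathbb{Z}\to X$ with $g(m)=q$ for all $m<0$, the sequence $x_n$ defined by $(x_n)_{\theta_k}=g(k-n)$ for $k\geq0$ and $(x_n)_\alpha=q$ for $\alpha\notin O$ is a genuine $\sigma_\psi$-anti-orbit. Checking $\sigma_\psi(x_{n+1})=x_n$ coordinatewise is routine: on $O$ one has $(x_{n+1})_{\theta_{k+1}}=g(k-n)=(x_n)_{\theta_k}$, while off $O$ injectivity of $\psi$ forces the only coordinate that $\psi$ can send into $O$ to be one mapping onto $\theta_0$, where the boundary condition $g\equiv q$ on the negatives makes both sides equal $q$. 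Thus all the freedom sits in the values $g(0),g(1),g(2),\dots$, which I will use to encode distinguishing data.

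Then I would choose the patterns. For each $s\geq1$ let $g_s$ read, along $k\geq0$, as $\underbrace{p\cdots p}_{s}\,q\,\underbrace{p\cdots p}_{s}\,qq\,\underbrace{p\cdots p}_{s}\,qqq\cdots$ (blocks of exactly $s$ copies of $p$ separated by $q$-runs of strictly increasing length) and $g_s\equiv q$ on the negatives; write $x^s_n$ for the resulting anti-orbit. The two things to verify are the exact analogues of Steps~1 and~2 of Lemma~\ref{salam20}. For the one-to-one property of $(x^s_n)_n$: an equality $x^s_n=x^s_{n'}$ with $n>n'$ forces $g_s(m)=g_s(m+(n-n'))$ for all $m\geq-n$, i.e.\ eventual periodicity of $g_s$, which the strictly increasing $q$-runs preclude. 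For pairwise disjointness: an equality $x^s_n=x^r_{n'}$ would force $g_s$ to coincide with a shift of $g_r$ on a tail, and comparing the lengths of the isolated $p$-blocks far out along the two patterns (all of length $s$, resp.\ $r$) forces $s=r$, after which the one-to-one step forces $n=n'$.

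Finally, the $x^1,x^2,x^3,\dots$ furnish infinitely many pairwise disjoint infinite $\sigma_\psi$-anti-orbits, whence $\mathfrak{a}(\sigma_\psi)=\infty$; and if moreover $\sigma_\psi$ is finite fibre, then $\mathrm{sc}(\sigma_\psi)=X^\Gamma$ together with the identity $\mathrm{ent}_{\mathrm{cset}}=\mathfrak{a}$ yields $\mathrm{ent}_{\mathrm{cset}}(\sigma_\psi)=\infty$. I expect the main obstacle to be the combinatorial bookkeeping in the patterns: because the hypotheses provide only a single infinite orbit in $\Gamma$, every anti-orbit must live on the same set $O$, so all the distinguishing work is pushed into the block patterns, and one must control every shift $d=n-n'$ of either sign simultaneously with the boundary condition $g\equiv q$ on the negatives.
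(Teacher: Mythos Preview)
Your proof is correct and follows essentially the same approach as the paper: encode patterns along the forward orbit $\{\psi^k(\theta):k\geq0\}$ and slide them to produce anti-orbits, with different patterns for different families. The only difference is cosmetic: instead of recycling the increasing-$q$-run pattern of Lemma~\ref{salam20}, the paper uses the simpler pattern $(\underbrace{p,\dots,p}_{m},\underbrace{q,\dots,q}_{n},p,p,\dots)$ and takes $m$ as the anti-orbit index and $n$ as the family label, which makes the disjointness and injectivity checks a one-line comparison of the unique $q$-block rather than an argument about maximal $p$-runs on a tail.
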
 
\begin{proof}
Suppose $\varphi:\Gamma\to\Gamma$ is one-to-one, and
$\theta\in\Gamma$ is a non-periodic point of $\varphi$.
Choose distinct $p,q\in X$ and for $m,n\geq 1$ let:
\[(x_n^m(0),x_n^m(1),x_n^m(2),\cdots):=(\:\underbrace{p,\cdots,p}_{m {\rm \: times}},
\underbrace{q,\cdots,q}_{n {\rm \: times}}, p,p,p,\cdots)\:,\]
now let:
\[z^{(n,m)}_\alpha:=\left\{\begin{array}{lc} x_n^m(k) & k\geq0,\alpha=\varphi^k(\theta) \: , \\
p & {\rm otherwise}\:. \end{array}\right. \]
Then for $z^{(n,m)}:=(z^{(n,m)}_\alpha)_{\alpha\in\Gamma}$, considering the sequences
\[(z^{(1,m)})_{m\geq1},(z^{(2,m)})_{m\geq1},(z^{(3,m)})_{m\geq1},\ldots\]
we have:
\begin{itemize}
\item For $k,n,i,j\geq1$ if $z^{(n,i)}=z^{(k,j)}$, then:
	\begin{eqnarray*}
	\SP\SP\SP z^{(n,i)}=z^{(k,j)} & \Rightarrow &(\forall\alpha\in\Gamma\: z_\alpha^{(n,i)}=z_\alpha^{(k,j)}) \\
	& \Rightarrow & (z_\theta^{(n,i)},z_{\varphi(\theta)}^{(n,i)},z_{\varphi^2(\theta)}^{(n,i)},\cdots)
		(z_\theta^{(k,j)},z_{\varphi(\theta)}^{(k,j)},z_{\varphi^2(\theta)}^{(k,j)},\cdots) \\
	& \Rightarrow & (x_n^i(0),x_n^i(1),\cdots)=(x_k^j(0),x_k^j(1),\cdots) \\
	& \Rightarrow & (\:\underbrace{p,\cdots,p}_{i {\rm \: times}},
\underbrace{q,\cdots,q}_{n {\rm \: times}}, p,\cdots)=(\:\underbrace{p,\cdots,p}_{j {\rm \: times}},
\underbrace{q,\cdots,q}_{k {\rm \: times}}, p,\cdots) \\
& \Rightarrow & (i=j\wedge n=k)
	\end{eqnarray*}
	Thus $(z^{(1,m)})_{m\geq1},(z^{(2,m)})_{m\geq1},(z^{(3,m)})_{m\geq1},\ldots$
	are paiwise disjoint infinite sequences.
\item For all $n,m\geq1$ and $\alpha\in \Gamma$ we have:
\begin{eqnarray*}
z^{(n,m+1)}_{\varphi(\alpha)}=q & \Leftrightarrow & \varphi(\alpha)\in\{\varphi^i(\theta):m+1\leq i<m+1+n\} \\
& \Leftrightarrow &\alpha\in\{\varphi^i(\theta):m\leq i<m+n\} \\
&\Leftrightarrow & z^{(n,m)}_\alpha=q
\end{eqnarray*}
thus $\sigma_\varphi(z^{(n,m+1)})=z^{(n,m)}$ and $(z^{(n,k)})_{k\geq1}$
is an anti-orbit
\end{itemize}
Hence $(z^{(1,m)})_{m\geq1},(z^{(2,m)})_{m\geq1},(z^{(3,m)})_{m\geq1},\ldots$ are pairwise
disjoint infinite $\sigma_\varphi-$anti-orbit sequences and $\mathfrak{a}(\sigma_\varphi)=\infty$.
\end{proof}
\begin{note}\label{salam70}
Suppose all points of $\Gamma$ are periodic points of $\varphi:\Gamma\to\Gamma$,
then $\sigma_\varphi:X^\Gamma\to X^\Gamma$ is bijective
(note that $\varphi:\Gamma\to\Gamma$ is bijective and apply Remark~\ref{salam5})
and using Corollary~\ref{salam45} we have:
\[{\rm ent}_{\rm cset}( \sigma_\varphi)={\rm ent}_{\rm set}( \sigma_\varphi^{-1})=
{\rm ent}_{\rm set}( \sigma_{\varphi^{-1}})=\left\{\begin{array}{lc} 0 & \exists n\geq1\:\varphi^n={\rm id}_\Gamma
\: , \\ \infty & {\rm otherwise}\:.\end{array}\right.\]
where for arbitrary $A$ we have $\mathop{{\rm id}_A:A\to A}\limits_{\SP\SP\:\:x\mapsto x}$.
\end{note}
\begin{corollary}\label{salam80}
Suppose $\varphi:\Gamma\to\Gamma$ is one-to-one and 
$\sigma_\varphi:X^\Gamma\to X^\Gamma$ is finite fibre, then
\[{\rm ent}_{\rm cset}( \sigma_\varphi)={\rm ent}_{\rm set}( \sigma_\varphi)
=\left\{\begin{array}{lc} 0 & \exists n\geq1\:\varphi^n={\rm id}_\Gamma
\: , \\ \infty & {\rm otherwise}\:.\end{array}\right.\]
\end{corollary}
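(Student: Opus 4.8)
The plan is to reduce the statement to results already in hand: Corollary~\ref{salam45} on the set-theoretical side, and Note~\ref{salam70} together with Lemma~\ref{salam60} on the contravariant side. The bridge between the two is a case split on whether $\varphi$ admits a non-periodic point, together with two elementary facts about injective maps. First, since $\varphi$ is one-to-one, a point $\theta$ is non-periodic precisely when its orbit $\{\varphi^n(\theta):n\geq0\}$ is infinite (an equality $\varphi^m(\theta)=\varphi^k(\theta)$ with $m>k$ would give $\varphi^{m-k}(\theta)=\theta$ by cancellation); hence a non-periodic point is exactly a wandering point, so $W(\varphi)\neq\varnothing$. Second, for a one-to-one $\varphi$ the relation $\varphi^n=\varphi^m$ with $n>m\geq1$ cancels to $\varphi^{n-m}={\rm id}_\Gamma$, so being quasi-periodic is equivalent to $\exists n\geq1\ \varphi^n={\rm id}_\Gamma$.

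Suppose first that $\varphi$ has a non-periodic point $\theta$. Then $W(\varphi)\neq\varnothing$, so $\varphi$ is not quasi-periodic, and Corollary~\ref{salam45} gives ${\rm ent}_{\rm set}(\sigma_\varphi)=\infty$; simultaneously, Lemma~\ref{salam60} applies (as $\varphi$ is one-to-one with a non-periodic point and $\sigma_\varphi$ is finite fibre) and gives ${\rm ent}_{\rm cset}(\sigma_\varphi)=\infty$. Since $\varphi^n(\theta)\neq\theta$ for all $n\geq1$, no power of $\varphi$ is ${\rm id}_\Gamma$, so we are in the ``otherwise'' branch and both entropies equal $\infty$, as claimed.

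In the remaining case every point of $\Gamma$ is periodic, so $\varphi$ is bijective and Note~\ref{salam70} evaluates ${\rm ent}_{\rm cset}(\sigma_\varphi)$ as $0$ when some $\varphi^n={\rm id}_\Gamma$ and as $\infty$ otherwise. For the set-theoretical side the second observation above identifies quasi-periodicity of $\varphi$ with the condition $\exists n\geq1\ \varphi^n={\rm id}_\Gamma$, so Corollary~\ref{salam45} yields ${\rm ent}_{\rm set}(\sigma_\varphi)=0$ exactly under that condition and $\infty$ otherwise. Thus in every subcase the two entropies coincide and agree with the asserted value. The only point requiring care is this last bookkeeping linking ``quasi-periodic'' to ``some power is the identity'' via injectivity and bijectivity; the rest is a direct appeal to the preceding lemmas and corollary.
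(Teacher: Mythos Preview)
Your proof is correct and follows exactly the route the paper intends: a case split on whether $\varphi$ has a non-periodic point, invoking Lemma~\ref{salam60} in the first case and Note~\ref{salam70} in the second for ${\rm ent}_{\rm cset}$, and Corollary~\ref{salam45} throughout for ${\rm ent}_{\rm set}$, together with the observation that for injective $\varphi$ quasi-periodicity is the same as some power being the identity. The paper's own proof is the one-line ``Use Corollary~\ref{salam45}, Lemma~\ref{salam60} and Note~\ref{salam70}''; you have simply spelled out the bookkeeping that this citation leaves implicit.
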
 
\begin{proof}
Use Corollary~\ref{salam45}, Lemma~\ref{salam60} and Note~\ref{salam70}.
\end{proof}
\begin{corollary}\label{salam90}
If $\sigma_\varphi:X^\Gamma\to X^\Gamma$ is finite fibre, then:
\[{\rm ent}_{\rm cset}( \sigma_\varphi)={\rm ent}_{\rm cset}( \sigma_{\widetilde{\varphi}})=\left\{\begin{array}{lc} 0 & \exists n\geq1\:
(\widetilde{\varphi})^n={\rm id}_\frac{\Gamma}{\Re}\: , \\ \infty & {\rm otherwise}\:.\end{array}\right.\]
\end{corollary}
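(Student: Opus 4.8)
The plan is to transport the entire computation to the quotient map $\widetilde{\varphi}:\frac{\Gamma}{\Re}\to\frac{\Gamma}{\Re}$, where, crucially, the relevant self-map is \emph{one-to-one}, and then to quote the injective case that has already been settled in Corollary~\ref{salam80}. Thus the proof should consist almost entirely of verifying that the hypotheses of the earlier results are met for $\widetilde{\varphi}$ and assembling them; no new combinatorial construction of anti-orbits is needed here, since all the work of that kind was done in Lemma~\ref{salam60}.

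Concretely, I would proceed in three steps. First, the left-hand equality ${\rm ent}_{\rm cset}(\sigma_\varphi)={\rm ent}_{\rm cset}(\sigma_{\widetilde{\varphi}})$ is immediate: it is exactly the ``in particular'' clause of Lemma~\ref{salam57}, valid because we are assuming $\sigma_\varphi$ is finite fibre. Second, I would record the two facts about $\widetilde{\varphi}$ that are required to apply Corollary~\ref{salam80} to it on the nonempty set $\frac{\Gamma}{\Re}$: that $\widetilde{\varphi}$ is one-to-one, which is the content of Note~\ref{salam50}(3) (equivalently, $[\varphi(\alpha)]_\Re=[\varphi(\beta)]_\Re$ forces $\varphi^{n+1}(\alpha)=\varphi^{n+1}(\beta)$ for some $n\geq1$, hence $\alpha\Re\beta$); and that $\sigma_{\widetilde{\varphi}}:X^{\frac{\Gamma}{\Re}}\to X^{\frac{\Gamma}{\Re}}$ is finite fibre, which is precisely Lemma~\ref{salam56}, again using the standing hypothesis that $\sigma_\varphi$ is finite fibre.

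Third, with both hypotheses in hand I would apply Corollary~\ref{salam80} with $\widetilde{\varphi}$ in the role of the injective self-map (and $\frac{\Gamma}{\Re}$ in the role of $\Gamma$). That corollary yields ${\rm ent}_{\rm cset}(\sigma_{\widetilde{\varphi}})=0$ when some power $(\widetilde{\varphi})^n$ equals ${\rm id}_{\frac{\Gamma}{\Re}}$ and ${\rm ent}_{\rm cset}(\sigma_{\widetilde{\varphi}})=\infty$ otherwise, which is exactly the right-hand side of the asserted formula. Chaining this with the equality from Lemma~\ref{salam57} completes the proof.

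I do not expect a genuine obstacle: the substance of the dichotomy already lives in Corollary~\ref{salam80}, and the only thing to be careful about is that that corollary was phrased for a generic one-to-one $\varphi$ on $\Gamma$, so I should make explicit that it is being reused verbatim for the injective map $\widetilde{\varphi}$ on the (nonempty, since $\Gamma\neq\varnothing$) set $\frac{\Gamma}{\Re}$, with $X$ still having at least two elements by the standing convention. The mild bookkeeping point worth double-checking is simply that the two hypotheses of Corollary~\ref{salam80} for $\widetilde{\varphi}$ come from \emph{different} earlier results, namely injectivity from Note~\ref{salam50} and the finite-fibre property from Lemma~\ref{salam56}.
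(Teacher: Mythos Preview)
Your proposal is correct and follows essentially the same route as the paper: cite Note~\ref{salam50} for injectivity of $\widetilde{\varphi}$, then invoke Corollary~\ref{salam80} and Lemma~\ref{salam57}. Your explicit mention of Lemma~\ref{salam56} to secure the finite-fibre hypothesis for $\sigma_{\widetilde{\varphi}}$ is a welcome clarification that the paper's terse proof leaves implicit.
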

\begin{proof}
First we recall that $\widetilde{\varphi}:\frac{\Gamma}{\Re}\to\frac{\Gamma}{\Re}$ is one-to-one
by Note~\ref{salam50}. Use Corollary~\ref{salam80} and Lemma~\ref{salam57} to complete the proof.
\end{proof}
\section{Other entropies: counterexamples}
\noindent The main aim of this section is to compare positive topological, algebraic, set-theoretical
and contravariant set-theoretical entropies in generalized shifts.
\begin{remark}\label{salam200}
If $G$ is an abelian group, $\theta:G\to G$ is a group homomorphism and $H$ is a
finite subset of $G$, then ${\rm ent}_{\rm alg}(\theta,H)={\displaystyle\lim_{n\to\infty}\dfrac{\log(|
H\cup\theta(H)\cup\cdots\cup\theta^{n-1}(H)|)}{n}}$ exists \cite{anna2, salce} and we call
${\rm ent}_{\rm alg}(\theta):=\sup\{{\rm ent}_{\rm alg}(\theta,H):H$ is a finite subgroup of $G\}$
the {\it algebraic entropy} of $\theta$. Moreover if $\varphi:\Gamma\to\Gamma$ is finite fibre
and $X$ is a finite nontrivial group with identity $e$, then ${\rm ent}_{\rm alg}(\sigma_\varphi\restriction_{
\mathop{\oplus}\limits_{\Gamma}X})={\rm ent}_{\rm cset}(\varphi)\log|X|$ 
(as it has been mentioned in  \cite[Theorem 4.14]{akhavin} ${\rm ent}_{\rm alg}(\sigma_\varphi\restriction_{
\mathop{\oplus}\limits_{\Gamma}X})$ is equal to the product of string number of $\varphi$ and $\log|X|$
this result has been evaluated in \cite[Theorem 7.3.3]{anna2} in the above form), where
$\mathop{\oplus}\limits_{\Gamma}X=\{(x_\alpha)_{\alpha\in\Gamma}\in X^\Gamma:
\exists\alpha_1,\ldots,\alpha_n\in\Gamma\:\forall\alpha\in\Gamma\setminus\{\alpha_1,\ldots,\alpha_n\}
\:(x_\alpha=e)\}$. Also by \cite{anna}, ${\rm ent}_{\rm alg}(\sigma_\varphi)\in\{0,\infty\}$ with
 ${\rm ent}_{\rm alg}(\sigma_\varphi)=0$ if and only if there exists $n>m\geq1$ with
 $\varphi^n=\varphi^m$ (thus ${\rm ent}_{\rm alg}(\sigma_\varphi)={\rm ent}_{\rm set}(\sigma_\varphi)$ by
 Corollary~\ref{salam45}).
 \end{remark}
\begin{remark}
Suppose $Y$ is a compact topological space and $\mathcal{U},\mathcal{V}$ are open
covers of $Y$, let $\mathcal{U}\vee\mathcal{V}:=\{U\cap V:U\in\mathcal{U},V\in\mathcal{V}\}$
and $N(\mathcal{U}):=\min\{|\mathcal{W}|:\mathcal{W}$ is a finite subcover of $\mathcal{U}\}$.
Now suppose $T:Y\to Y$ is continuous, then 
${\rm ent}_{\rm top}(T,\mathcal{U}):=
{\displaystyle\lim_{n\to\infty}\dfrac{\log N(\mathcal{U}\vee T^{-1}(\mathcal{U})\vee\cdots\vee
T^{-(n-1)}(\mathcal{U}))}{n}}$ exists \cite{walters} and we call 
${\rm ent}_{\rm top}(T):=\sup\{{\rm ent}_{\rm top}(T,\mathcal{W}):\mathcal{W}$ is a finite
open cover of $Y\}$ the {\it topological entropy} of $T$. If $X$ is a finite discrete 
topological space with at least two elements and $X^\Gamma$ considered with product (pointwise
convergence) topology, then ${\rm ent}_{\rm top}(\sigma_\varphi)={\rm ent}_{\rm set}(\varphi)\log|X|$
\cite{set}.
\end{remark}
\noindent In the rest let:
\begin{itemize}
\item $\mathcal{C}$ is the collection of all generalized shifts $\sigma_\psi:Y^\Gamma\to
	Y^\Gamma$ such that $Y$ is a nontrivial finite discrete topological group (so $2\leq|Y|<\infty$), $\Gamma$ is a nonempty set and both maps
	$\psi:\Gamma\to\Gamma$, $\sigma_\psi:Y^\Gamma\to
	Y^\Gamma$ are finite fibre,
\item $\mathcal{C}_{\rm top}$ is the collection of all elements of $\mathcal C$ like
	$\sigma_\psi:Y^\Gamma\to Y^\Gamma$ such that 
	${\rm ent}_{\rm top}(\sigma_\psi)>0$,
\item $\mathcal{C}_{\rm dalg}$ is the collection of all elements of $\mathcal C$ like
	$\sigma_\psi:Y^\Gamma\to Y^\Gamma$ such that 
	${\rm ent}_{\rm alg}(\sigma_\psi\restriction_{\mathop{\oplus}\limits_{\Gamma}Y})>0$,
\item $\mathcal{C}_{\rm cset}$ is the collection of all elements of $\mathcal C$ like
	$\sigma_\psi:Y^\Gamma\to Y^\Gamma$ such that
	${\rm ent}_{\rm cset}(\sigma_\psi)>0$,
\item $\mathcal{C}_{\rm set}$ is the collection of all elements of $\mathcal C$ like
	$\sigma_\psi:Y^\Gamma\to Y^\Gamma$ such that $
	{\rm ent}_{\rm set}(\sigma_\psi)>0$ (i.e., ${\rm ent}_{\rm alg}(\sigma_\psi)>0$ by Remark~\ref{salam200}).
\end{itemize}
\begin{lemma}\label{salam95}
We have $\mathcal{C}_{\rm top}\subseteq \mathcal{C}_{\rm cset}\subseteq  \mathcal{C}_{\rm set}$
and $\mathcal{C}_{\rm dalg}\subseteq  \mathcal{C}_{\rm set}$. As a matter of fact
for an element of
$\mathcal C$ like $\sigma_\varphi:Y^\Gamma\to Y^\Gamma$ we have:
\begin{center}
${\rm ent}_{\rm top}(\sigma_\varphi)\leq {\rm ent}_{\rm cset}(\sigma_\varphi)\leq 
{\rm ent}_{\rm set}(\sigma_\varphi)$ and 
${\rm ent}_{\rm alg}(\sigma_\varphi\restriction_{\mathop{\oplus}\limits_{\Gamma}Y})\leq
{\rm ent}_{\rm set}(\sigma_\varphi)$.
\end{center}
\end{lemma}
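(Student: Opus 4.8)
The plan is to prove the four numerical inequalities first; the three asserted inclusions are then immediate, since, for instance, $\sigma_\varphi\in\mathcal{C}_{\rm top}$ means ${\rm ent}_{\rm top}(\sigma_\varphi)>0$, whence ${\rm ent}_{\rm cset}(\sigma_\varphi)\geq{\rm ent}_{\rm top}(\sigma_\varphi)>0$ and $\sigma_\varphi\in\mathcal{C}_{\rm cset}$, and likewise for the other two containments. Throughout I fix $\sigma_\varphi\in\mathcal{C}$, so $Y$ is finite with $2\leq|Y|<\infty$ (hence $0<\log|Y|<\infty$) and both $\varphi$ and $\sigma_\varphi$ are finite fibre. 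I would first record the four relevant values: by Corollary~\ref{salam45}, ${\rm ent}_{\rm set}(\sigma_\varphi)=0$ exactly when $\varphi$ is quasi-periodic and $=\infty$ otherwise; by Corollary~\ref{salam90}, ${\rm ent}_{\rm cset}(\sigma_\varphi)=0$ exactly when $\widetilde{\varphi}^{n}={\rm id}_{\frac{\Gamma}{\Re}}$ for some $n\geq1$ and $=\infty$ otherwise; by the remark on topological entropy, ${\rm ent}_{\rm top}(\sigma_\varphi)=\mathfrak{o}(\varphi)\log|Y|$; and by Remark~\ref{salam200}, ${\rm ent}_{\rm alg}(\sigma_\varphi\restriction_{\mathop{\oplus}\limits_{\Gamma}Y})=\mathfrak{a}(\varphi)\log|Y|$. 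The decisive structural feature is that the right-hand side of each of the four inequalities lies in $\{0,\infty\}$, so each inequality reduces to the single implication ``right-hand side $=0\ \Rightarrow$ left-hand side $=0$''.

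For ${\rm ent}_{\rm cset}(\sigma_\varphi)\leq{\rm ent}_{\rm set}(\sigma_\varphi)$ I would assume ${\rm ent}_{\rm set}(\sigma_\varphi)=0$, i.e.\ $\varphi^{n}=\varphi^{m}$ for some $n>m\geq1$. Passing to $\frac{\Gamma}{\Re}$ gives $\widetilde{\varphi}^{n}=\widetilde{\varphi}^{m}$; since $\widetilde{\varphi}$ is one-to-one by Note~\ref{salam50}, the map $\widetilde{\varphi}^{m}$ is left-cancellable, so $\widetilde{\varphi}^{n-m}={\rm id}_{\frac{\Gamma}{\Re}}$ with $n-m\geq1$, and ${\rm ent}_{\rm cset}(\sigma_\varphi)=0$ by Corollary~\ref{salam90}. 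For ${\rm ent}_{\rm top}(\sigma_\varphi)\leq{\rm ent}_{\rm cset}(\sigma_\varphi)$ I would assume ${\rm ent}_{\rm cset}(\sigma_\varphi)=0$, i.e.\ $\widetilde{\varphi}^{k}={\rm id}_{\frac{\Gamma}{\Re}}$ for some $k\geq1$; then $\varphi^{k}(\alpha)\Re\alpha$ for every $\alpha\in\Gamma$, so there is $s\geq1$ with $\varphi^{s+k}(\alpha)=\varphi^{s}(\alpha)$, which forces each orbit $\{\varphi^{i}(\alpha):i\geq0\}$ to be finite. Hence $W(\varphi)=\varnothing$, so $\mathfrak{o}(\varphi)=0$ and ${\rm ent}_{\rm top}(\sigma_\varphi)=0$.

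For ${\rm ent}_{\rm alg}(\sigma_\varphi\restriction_{\mathop{\oplus}\limits_{\Gamma}Y})\leq{\rm ent}_{\rm set}(\sigma_\varphi)$ I again assume ${\rm ent}_{\rm set}(\sigma_\varphi)=0$, i.e.\ $\varphi^{n}=\varphi^{m}$ with $n>m\geq1$, and aim to show $\mathfrak{a}(\varphi)=0$, equivalently that $\varphi$ admits no injective (infinite) anti-orbit. The key computation, which I expect to be the main technical point, runs as follows: for any anti-orbit $(x_j)_{j\geq1}$ of $\varphi$ (so $\varphi(x_{j+1})=x_j$) one has $\varphi^{i}(x_{i+1})=x_1$ and, more generally, $\varphi^{m}(x_{n+1})=x_{n+1-m}$; applying $\varphi^{n}=\varphi^{m}$ to $x_{n+1}$ then yields
\[x_1=\varphi^{n}(x_{n+1})=\varphi^{m}(x_{n+1})=x_{n+1-m}=x_{(n-m)+1},\]
where $(n-m)+1\geq2$, so the anti-orbit is not injective. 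Thus $\varphi$ has no infinite anti-orbit, $\mathfrak{a}(\varphi)=0$, and ${\rm ent}_{\rm alg}(\sigma_\varphi\restriction_{\mathop{\oplus}\limits_{\Gamma}Y})=\mathfrak{a}(\varphi)\log|Y|=0={\rm ent}_{\rm set}(\sigma_\varphi)$. In each of the three reductions the complementary case (right-hand side $=\infty$) makes the inequality hold trivially, so the four inequalities, and with them the asserted inclusions, follow.
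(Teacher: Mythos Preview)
Your proof is correct and follows essentially the same approach as the paper's. Both arguments exploit that ${\rm ent}_{\rm set}(\sigma_\varphi)$ and ${\rm ent}_{\rm cset}(\sigma_\varphi)$ lie in $\{0,\infty\}$ (Corollaries~\ref{salam45} and~\ref{salam90}) and reduce each inequality to a single implication via the formulas ${\rm ent}_{\rm top}(\sigma_\varphi)=\mathfrak{o}(\varphi)\log|Y|$ and ${\rm ent}_{\rm alg}(\sigma_\varphi\restriction_{\oplus_\Gamma Y})=\mathfrak{a}(\varphi)\log|Y|$; the only difference is that for the first and third inequalities the paper argues in the direction ``left side $>0\Rightarrow$ right side $=\infty$'' while you argue the contrapositive ``right side $=0\Rightarrow$ left side $=0$'', which amounts to the same content (and your treatment of ${\rm ent}_{\rm cset}(\sigma_\varphi)\leq{\rm ent}_{\rm set}(\sigma_\varphi)$ is word-for-word the paper's). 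A trivial slip: you refer to ``four numerical inequalities'' but there are only three.
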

\begin{proof} $\:$
\begin{itemize}
\item ``${\rm ent}_{\rm top}(\sigma_\varphi)\leq {\rm ent}_{\rm cset}(\sigma_\varphi)$'' 
Suppose  ${\rm ent}_{\rm top}(\sigma_\varphi)>0$, then
\linebreak
$\mathfrak{o}(\varphi)\log|Y|={\rm ent}_{\rm set}(\varphi)\log|Y|={\rm ent}_{\rm top}(\sigma_\varphi)>0$,
thus $\mathfrak{o}(\varphi)>0$ and $W(\varphi)\neq\varnothing$. Choose
$\alpha\in W(\varphi)$, then $\{\varphi^n(\alpha)\}_{n\geq0}$ is a one-to-one sequence
thus for all $n>m\geq0$, $[\varphi^n(\alpha)]_\Re\neq[\varphi^m(\alpha)]_\Re$, so for all
$n\geq1$ we have $\widetilde{\varphi}^n([\alpha]_\Re)\neq[\alpha]_\Re$, hence by
Corollary~\ref{salam90}, ${\rm ent}_{\rm cset}(\sigma_\varphi)>0$ and 
${\rm ent}_{\rm cset}(\sigma_\varphi)=\infty(\geq{\rm ent}_{\rm top}(\sigma_\varphi)$).
\item ``${\rm ent}_{\rm cset}(\sigma_\varphi)\leq {\rm ent}_{\rm set}(\sigma_\varphi)$'' 
Suppose
${\rm ent}_{\rm set}(\sigma_\varphi)\neq\infty$, then ${\rm ent}_{\rm set}(\sigma_\varphi)=0$
and there exists $n>m\geq1$ with $\varphi^n=\varphi^m$, thus $\widetilde{\varphi}^n=
\widetilde{\varphi}^m$, and using the fact that $\widetilde{\varphi}$ is one-to-one
we lave $\widetilde{\varphi}^{n-m}={\rm id}_{\frac{\Gamma}{\Re}}$, thus
${\rm ent}_{\rm cset}(\sigma_\varphi)=0$ by Corollary~\ref{salam90}.
\item ``${\rm ent}_{\rm alg}(\sigma_\varphi\restriction_{\mathop{\oplus}\limits_{\Gamma}Y})\leq
{\rm ent}_{\rm set}(\sigma_\varphi)$''
Suppose  
${\rm ent}_{\rm alg}(\sigma_\varphi\restriction_{\mathop{\oplus}\limits_{\Gamma}Y})>0$, then
\linebreak
$\mathfrak{a}(\varphi)\log|Y|={\rm ent}_{\rm cset}(\varphi)\log|Y|={\rm ent}_{\rm alg}(\sigma_\varphi\restriction_{\mathop{\oplus}\limits_{\Gamma}Y})>0$,
thus $\mathfrak{a}(\varphi)>0$ and there exists a one-to-one anti-orbit sequence
$\{\alpha_n\}_{n\geq1}$ in $\Gamma$.
For all $n>m\geq1$ we have 
$\varphi^n(\alpha_{n+m})=\alpha_m\neq\alpha_n=\varphi^m(\alpha_{n+m})$ and
$\varphi^n\neq\varphi^m$, thus 
${\rm ent}_{\rm set}(\sigma_\varphi)=\infty(\geq{\rm ent}_{\rm alg}(\sigma_\varphi\restriction_{\mathop{\oplus}\limits_{\Gamma}Y})$) by Corollary~\ref{salam45}.
\end{itemize}
\end{proof}
\begin{jadval}\label{salam99}
We have the following table, in which the mark ``$\surd$'' means
$p\leq q$ for the corresponding case for all $\sigma_\psi:Y^\Gamma\to Y^\Gamma$
in $\mathcal C$, also the mark ``$\times$'' indicates that there exists 
$\sigma_\psi:Y^\Gamma\to Y^\Gamma$
in $\mathcal C$ with $p>q$ in the corresponding case.
\begin{center}
\begin{tabular}{l|c|c|c|c|}
$\dfrac{\SP\SP\SP\SP\SP\SP q}{p\SP\SP\SP\SP\SP\SP}$ & ${\rm ent}_{\rm top}(\sigma_\varphi)$ & 
${\rm ent}_{\rm alg}(\sigma_\varphi\restriction_{\mathop{\oplus}\limits_{\Gamma}Y})$ &
${\rm ent}_{\rm cset}(\sigma_\varphi)$ & ${\rm ent}_{\rm set}(\sigma_\varphi)$ \\ \hline
${\rm ent}_{\rm top}(\sigma_\varphi)$ & $\surd$ & $\times$ & $\surd$ & $\surd$ \\ \hline
${\rm ent}_{\rm alg}(\sigma_\varphi\restriction_{\mathop{\oplus}\limits_{\Gamma}Y})$ &
$\times$ & $\surd$ & $\times$ & $\surd$ \\ \hline 
${\rm ent}_{\rm cset}(\sigma_\varphi)$ & $\times$ & $\times$ & $\surd$ & $\surd$ \\ \hline
${\rm ent}_{\rm set}(\sigma_\varphi)$ & $\times$ & $\times$ & $\times$ & $\surd$ \\ \hline
\end{tabular}
\end{center}
\end{jadval}
\begin{proof}
For all ``$\surd$'' marks use Lemma~\ref{salam95}. In order to establish ``$\times$'' marks
use the following counterexamples.
\\
Define $\lambda_1,\lambda_2,\lambda_3:{\mathbb Z}\to{\mathbb Z}$ with the following diagrams:
{\small \begin{center}
\begin{tabular}{c|c|c}
$\lambda_1$ & $\lambda_2$ & $\lambda_3$ \\ \hline 
	$\xymatrix{ \vdots\ar[d] & \vdots\ar[d] \\ 
	3 \ar[d] & -3 \ar[d] \\
	2 \ar[d] & -2 \ar[d] \\
	1 \ar[d] & -1 \ar[dl] \\
	0\ar@(dl,lu) & }$
& $\xymatrix{0 \ar@(ur,rd) & \\ 1 \ar[d] & -1 \ar[d] \\ 2 \ar[d] & -2 \ar[d] \\ 3 \ar[d] & -3 \ar[d] \\
\vdots & \vdots}$
& $\xymatrix{\cdots \ar[r] & -3 \ar[r] & -2 \ar[r] & -1 \ar[r] & 0  \ar@(ur,rd) \\
  &   &   &   & 1  \ar@(ur,rd) \\
 &  &  & 2 \ar[r] & 3  \ar@(ur,rd) \\
 &  & 4 \ar[r] & 5 \ar[r] & 6  \ar@(ur,rd) \\
& 7 \ar[r] & 8 \ar[r] & 9 \ar[r] & 10  \ar@(ur,rd) \\
 &  &  &  & \vdots }$ \\ \hline
\end{tabular}
\end{center}}
\noindent So:
{\small
\[\lambda_1(n)=\left\{\begin{array}{lc} n-1 & n\geq1\:, \\ 0 & n=0\:, \\ n+1 & n\leq-1\:, \end{array}
\right.
\SP
\lambda_2(n)=\left\{\begin{array}{lc} n+1 & n\geq1\:, \\ 0 & n=0\:, \\ n-1 & n\leq-1\:, \end{array}
\right.\SP 
\lambda_3(n)=\left\{\begin{array}{lc} n+1 & n\leq-1\:, \\ 0 & n=0,1\:, \\ 3 & n=2\:, \\
3 & n=3 \:, \\ 5 & n=4 \:, \\ 6 & n=5 \: \\ 6 & n=6\: , \\  \vdots & \end{array}
\right.\]}
\noindent Then for discrete finite abelian group $G$ with $|G|\geq2$ and
$\sigma_{\lambda_i}:G^{\mathbb Z}\to G^{\mathbb Z}$ we have:
\\
$\bullet$ $\mathfrak{o}(\lambda_1)=\mathfrak{o}(\lambda_3)=0$, $\mathfrak{o}(\lambda_2)=2$,
	$\mathfrak{a}(\lambda_1)=2$, $\mathfrak{a}(\lambda_2)=0$, $\mathfrak{a}(\lambda_3)=1$,
\\
$\bullet$ ${\rm ent}_{\rm top}(\sigma_{\lambda_1})={\rm ent}_{\rm top}(\sigma_{\lambda_3})=0$,
	${\rm ent}_{\rm top}(\sigma_{\lambda_2})=2\log|G|$,
\\
$\bullet$ ${\rm ent}_{\rm alg}(\sigma_{\lambda_1}\restriction_{\mathop{\oplus}\limits_{\Gamma}G})
	=2\log|G|$, 
	${\rm ent}_{\rm alg}(\sigma_{\lambda_2}\restriction_{\mathop{\oplus}\limits_{\Gamma}G})=0$,
	${\rm ent}_{\rm alg}(\sigma_{\lambda_3}\restriction_{\mathop{\oplus}\limits_{\Gamma}G})=
	\log|G|$,
\\
$\bullet$ ${\rm ent}_{\rm cset}(\sigma_{\lambda_1})={\rm ent}_{\rm cset}(\sigma_{\lambda_3})=0$,
	${\rm ent}_{\rm cset}(\sigma_{\lambda_2})=\infty$,
\\
$\bullet$ ${\rm ent}_{\rm set}(\sigma_{\lambda_1})={\rm ent}_{\rm set}(\sigma_{\lambda_2})=
	{\rm ent}_{\rm set}(\sigma_{\lambda_3})=\infty$,
\\
which complete the proof. 
\end{proof}
\begin{nemoodar}
We have the following diagram:
\begin{center}
\unitlength 0.5mm 
\linethickness{0.4pt}
\ifx\plotpoint\undefined\newsavebox{\plotpoint}\fi 
\begin{picture}(254,110.75)(0,0)
\thicklines
\put(3,4.25){\framebox(251,106.5)[]{}}
\put(9,18){\framebox(165,72.5)[]{}}
\put(13.75,22.75){\framebox(123.5,60.5)[]{}}
\put(232.5,101.5){\makebox(0,0)[cc]{$\mathcal C$}}
\put(202,86.25){\makebox(0,0)[cc]{$\mathcal{C}_{\rm set}$}}
\put(155.25,82.5){\makebox(0,0)[cc]{$\mathcal{C}_{\rm cset}$}}
\put(29.25,69.75){\makebox(0,0)[cc]{$\mathcal{C}_{\rm top}$}}
\put(31.25,43){\makebox(0,0)[cc]{E1}}
\put(232.75,79.75){\makebox(0,0)[cc]{E5}}
\put(190.75,70){\makebox(0,0)[cc]{E4}}
\put(98.5,44.25){\makebox(0,0)[cc]{E3}}
\put(155,69.25){\makebox(0,0)[cc]{E6}}
\put(154.75,41.5){\makebox(0,0)[cc]{E7}}
\put(192.75,27.25){\makebox(0,0)[cc]{E2}}
\put(191.75,45.5){\makebox(0,0)[cc]{$\mathcal{C}_{\rm dalg}$}}
\put(6,7.5){\framebox(214.25,92)[]{}}
\put(57,11.75){\framebox(159.25,52)[]{}}
\end{picture}
\end{center}
where by ``Ei'' we mean counterexample $\sigma_{\mu_i}:G^{\Lambda_i}\to G^{\Lambda_i}$
for finite discrete abelian group $G$ with $|G|\geq2$.
\begin{itemize}
\item for $\Lambda_1:={\mathbb Z}$ and $\mu_1:=\lambda_2$ as in Table~\ref{salam99}, we have
	${\rm ent}_{\rm top}(\sigma_{\mu_1})=2\log|G|>0$ and 
	${\rm ent}_{\rm alg}(\sigma_{\mu_1}\restriction_{\mathop{\oplus}\limits_{\Lambda_1}G})=0$,
\item for $\Lambda_2:={\mathbb Z}$ and $\mu_2:=\lambda_1$ as in Table~\ref{salam99}, we have
	${\rm ent}_{\rm alg}(\sigma_{\mu_2}\restriction_{\mathop{\oplus}\limits_{\Lambda_2}G})=
	2\log|G|>0$ and ${\rm ent}_{\rm cset}(\sigma_{\mu_2})=0$,
\item for $\Lambda_3:={\mathbb Z}\times\{0,1\}$ and
	\[\mu_3(n,i)=\left\{\begin{array}{lc} \mu_1(n) & i=0\:, \\ \mu_2(n) & i=1\:, \end{array}\right.\]
	we have ${\rm ent}_{\rm top}(\sigma_{\mu_3})=	{\rm ent}_{\rm alg}(\sigma_{\mu_3}
	\restriction_{\mathop{\oplus}\limits_{\Lambda_3}G})=2\log|G|>0$,
\item for $\Lambda_4:={\mathbb N}$ and $\mu_4=\lambda_3\restriction_{\mathbb N}$ we have
	$	{\rm ent}_{\rm alg}(\sigma_{\mu_4}\restriction_{\mathop{\oplus}\limits_{\Lambda_4}G})={\rm ent}_{\rm cset}(\sigma_{\mu_4})=0$
	and ${\rm ent}_{\rm set}(\sigma_{\mu_4})=\infty$,
\item for $\Lambda_5:={\mathbb Z}$ and $\mu_5(n)=-n$ ($n\in\mathbb{Z}$) we have  
	${\rm ent}_{\rm set}(\sigma_{\mu_5})=0$,
\item for $\Lambda_6:={\mathbb N}$ and $\mu_6=(1,2)(3,4,5)(6,7,8,9)(10,11,12,13,14)\cdots$
	we have 
	$	{\rm ent}_{\rm alg}(\sigma_{\mu_6}\restriction_{\mathop{\oplus}\limits_{\Lambda_6}G})=
	{\rm ent}_{\rm top}(\sigma_{\mu_6})=0$ and ${\rm ent}_{\rm cset}(\sigma_{\mu_6})=\infty$,
\item for $\Lambda_7=(\mathbb{N}\times\{0\})\cup(\mathbb{Z}\times\{1\})$ and
	\[\mu_7(n,i)=\left\{\begin{array}{lc} \mu_6(n) & i=0\:, \\ \mu_2(n) & i=1\:, \end{array}\right.\]
	we have 
	${\rm ent}_{\rm alg}(\sigma_{\mu_7}\restriction_{\mathop{\oplus}\limits_{\Lambda_7}G})=
	2\log|G|>0$, ${\rm ent}_{\rm cset}(\sigma_{\mu_7})=\infty$, 
	${\rm ent}_{\rm set}(\sigma_{\mu_7})=0$.
\end{itemize}
\end{nemoodar}

\noindent {\small 
{\bf Zahra Nili Ahmadabadi},
Islamic Azad University, Science and Research Branch, 
Tehran, Iran
({\it e-mail}: zahra.nili.a@gmail.com)
\\
{\bf Fatemah Ayatollah Zadeh Shirazi},
Faculty of Mathematics, Statistics and Computer Science,
College of Science, University of Tehran ,
Enghelab Ave., Tehran, Iran \linebreak
({\it e-mail}: fatemah@khayam.ut.ac.ir)}

\end{document}